\newtheorem{exm}{\bf Example} 
\newenvironment{example}{\begin{exm}} {\end{exm}} 
\newcommand{\bs}[1]{\boldsymbol{#1}}
\def \ri {{\rm i}}
\crefname{hypothesis}{Hypothesis}{Hypotheses}
\title{A highly efficient and accurate divergence-free spectral method for curl-curl equation in two and three dimensions\thanks{Submitted to the editors DATE.}}
\author{Lechang Qin\thanks{School of Mathematical Sciences,  Shanghai Jiao Tong University, Shanghai, 200240, China. 
  (\email{tony$\_$qin0225@sjtu.edu.cn}  (L. Qin)).}
\and Changtao Sheng\thanks{School of Mathematics, Shanghai University of Finance and Economics, Shanghai 200433, China.  The research of this author is partially supported by by the National Natural Science Foundation of China (Nos. 12201385 and 12271365), Shanghai Pujiang Program 21PJ1403500, and the Fundamental Research Funds for the Central Universities 2021110474. 
  (\email{ctsheng@sufe.edu.cn} (C. Sheng)).}
\and Zhiguo Yang\thanks{Corresponding author. School of Mathematical Sciences, MOE-LSC and CMA-Shanghai, Shanghai Jiao Tong University, Shanghai 200240, China. The research of this author is partially supported by the National Natural Science Foundation of China (No. 12101399), the Shanghai Sailing Program (No. 21YF1421000), the Strategic Priority Research Program of Chinese Academy of Sciences (No. XDA25010402) and the Key Laboratory of Scientific and Engineering Computing (Ministry of Education).
  (\email{yangzhiguo@sjtu.edu.edu} (Z. Yang)).}}
\begin{document}
\nolinenumbers
\maketitle

\begin{abstract}
In this paper, we present a fast divergence-free spectral algorithm (FDSA) for the curl-curl problem. Divergence-free bases in two and three dimensions are constructed by using the generalized Jacobi polynomials.  An accurate spectral method with exact preservation of the divergence-free constraint point-wisely is then proposed, and its corresponding error estimate is established.  We then present a highly efficient solution algorithm based on a combination of matrix-free preconditioned Krylov subspace iterative method and a fully diagonalizable auxiliary problem, which is derived from the spectral discretisations of generalized eigenvalue problems of Laplace and biharmonic operators. We rigorously prove that the dimensions of the invariant subspace of the preconditioned linear system resulting from the divergence-free spectral method with respect to the dominate eigenvalue $1$, are $(N-3)^2$ and $2(N-3)^3$ for two- and three-dimensional problems with $(N-1)^2$ and $2(N-1)^3$ unknowns, respectively. Thus, the proposed method usually takes only several iterations to converge, and astonishingly,  as the problem size (polynomial order) increases, the number of iterations will decrease, even for highly indefinite system and oscillatory solutions. As a result, the computational cost of the solution algorithm is only a small multiple of $N^3$ and $N^4$ floating number operations for 2D and 3D problems, respectively. Plenty of numerical examples for solving the curl-curl problem with both constant and variable coefficients in two and three dimensions are  presented to demonstrate the accuracy and efficiency of the proposed method. 
\end{abstract}

\begin{keywords}
Spectral method, curl-curl problem, divergence-free condition, property-preserving discretization, matrix diagonalisation method
\end{keywords}

\begin{AMS}
  65N35, 65N22, 65F05, 35J05
\end{AMS}

\section{Introduction}
The curl-curl problem with a divergence-free constraint arises naturally in many real-world applications related to electromagnetics, such as semiconductor manufacturing, astrophysics, inertial confinement fusion, photonic crystals, and more \cite{chen1984introduction, davidson2002introduction, joannopoulos2011photonic, markowich2012semiconductor,zhang2020double}. When dealing with problems of this type, which involve variable coefficients, inhomogeneities, or nonlinear couplings, obtaining analytical solutions can often be challenging.  In such scenarios, efficient and accurate simulations of these problems are of great importance in gaining a deep understanding of the underlying physical mechanisms and in addressing practical engineering problems.

Nevertheless, the aforementioned problem is notoriously difficult to solve numerically. It poses challenges and difficulties that can be attributed to at least three different aspects. Firstly, even a small violation of the divergence-free constraint can lead to a significant deviation of the numerical solution from the actual solution \cite{brackbill1985fluid, brackbill1980effect}. Secondly, the indefinite nature of the system can cause the solution algorithm to converge slowly \cite{boffi2013mixed, Peter2003}. Finally, an excessively large number of degrees of freedom may be required to accurately approximate highly oscillatory solutions \cite{babuska1997pollution, gottlieb1977numerical}. On the one hand, it is essential to construct approximation spaces that ensure well-posedness of the discrete problems and adherence to the divergence-free constraint (either strongly or weakly). On the other hand, it is equally important to devise efficient algorithms for solving the resulting indefinite linear system, particularly for solutions with high oscillation.

There has been a long standing interest in developing numerical algorithms for the curl-curl problem that preserve the divergence-free constraint at the discrete level. The common methods of choice include, but not limited to, staggered-grid finite difference Yee algorithm \cite{ taflove1980application, yee1966numerical}, the vector potential method \cite{cyr2013new, jardin2010computational,shadid2010towards}, the divergence-regularzation method \cite{costabel2002weighted, duan2012delta,hazard1996solution}, and the Lagrange multiplier method \cite{Peter2003}. These methods require either the solution of an extra equation to project the numerical solution to certain divergence-free space, or augmenting/reformulating the original system with new variables, leading to equations with higher spatial order or large coupled system to be solved. In contrast to the aforementioned methods,  the method of constructing divergence-free basis can not only preserve the divergence-free constraint in a straightforward way, but also significantly reduces the degrees of freedom to be solved.  In \cite{ye1997discrete}, a weakly divergence-free finite element basis was constructed for the Stokes equation. In \cite{guo2016spectral}, Guo and Jiao proposed a divergence-free basis functions for $n$-dimensional Navier-Stokes equation. Cai et. al in \cite{cai2013divergence} constructed a divergence-free $\bs H({\rm div})$ hierarchical basis for the MHD system. Local divergence-free methods based on discontinuous-Garlerkin (DG) or weak-Galerkin (WG) frameworks were also proposed in \cite{hiptmair2018fully, li2005locally, mu2018discrete}.  However, to the best of the authors' knowledge, there are no fast divergence-free spectral algorithm available for the curl-curl problem.

The main contributions of this paper are twofold. Firstly, we derive a novel divergence-free spectral approximation  bases using generalized Jacobi polynomials for the curl-curl problem in both two and three dimensions. We conduct a rigorous error estimate to justify the spectral accuracy of this method. Secondly, we propose a highly efficient solution algorithm, which draws inspirations from spectral discretizations of the eigenvalue problems of Laplace and biharmonic operators and the fast diagonalisation method for Poisson-type equations. Lemma \ref{prop: MS}  demonstrates that the mass and stiffness matrices resulted from the proposed method are closely related, inspiring the construction of a fast diagonalizable auxiliary problem. A highly efficient solution algorithm is then proposed by a proper combination of the auxiliary problem and matrix-free preconditioned Krylov subspace iterative method. We prove rigorously that the dimensions of the invariant subspace of the preconditioned system with respect to eigenvalue 1 are precisely $(N-3)^2$ and $2(N-3)^3$ for two- and three-dimensional problems with $(N-1)^2$ and $2(N-1)^3$  unknowns,  respectively. As a result, the proposed fast divergence-free spectral algorithm usually takes only several iterations to converge, and as the problem size increases, the number of iterations will decrease, even for highly indefinite system and oscillatory solutions. Thanks to these properties, the presented divergence-free spectral method and corresponding fast solution algorithms are very competitive and computationally attractive.

The remainder of the paper is structured as follows: In Section 2, we introduce the divergence-free spectral method and conduct the corresponding error estimate for the curl-curl problem in two dimensions. Then, we present a highly efficient solution algorithm that combines a fast diagonalizable auxiliary problem and matrix-free preconditioned Krylov subspace iterative method. Theoretical results on the dimensions of the invariant subspace of the preconditioned system with respect to the dominate eigenvalue 1 are also presented. In Section 3, we expand upon the proposed divergence-free spectral method and fast solution algorithm to the three-dimensional case. In Section 4, we present ample numerical results to demonstrate the efficiency and accuracy of the proposed method, particularly for indefinite systems and highly oscillatory solutions. Finally, in Section 5, we conclude the discussion with closing remarks.

 \section{A fast divergence-free spectral algorithm for 2D curl-curl problem}

We first introduce some notation and review some relevant properties of the generalised Jacobi polynomials (cf.\,\cite{GUO20091011,ShenTangWang2011}), which will be an indispensable building block for the construction of divergence-free basis. 

\subsection{Generalised Jacobi polynomials} 
For $\alpha,\beta>-1$, the classical Jacobi polynomials $P_{n}^{(\alpha,\beta)}(\xi)$ are mutually orthogonal with respect to the weight function $\omega^{\alpha, \beta}(\xi)=(1-\xi)^{\alpha}(1+\xi)^{\beta}$ on $\Lambda=(-1,1)$, that is
\begin{equation}\label{GUPeq9}
\int_{-1}^1P_n^{(\alpha,\beta)}P_m^{(\alpha,\beta)}\omega^{\alpha, \beta}\,\mathrm{d}\xi=\frac{2^{\alpha+\beta+1}\Gamma(n+\alpha+1)\Gamma(n+\beta+1)}{(2n+\alpha+\beta+1)\Gamma(n+1)\Gamma(n+\alpha+\beta+1)}\delta_{nm},
\end{equation}
where $\delta_{nm}$ is the Kronecker symbol. The classical Jacobi polynomials can be generalized to cases with general $\alpha,\beta\in\mathbb{R}$, that is, the generalised Jacobi functions (cf.\,\cite{GUO20091011, ShenTangWang2011}). The flexibility afforded by the parameters $\alpha,\beta$ allows us to design a suitable basis for exact preservation of the divergence-free constraint.  Hereafter, we adopt the following generalized Jacobi polynomials:
\begin{align}
&P_n^{(0,0)}(\xi)=L_n(\xi), \;\;\xi\in\Lambda, \;n\geq 0, \label{eq: K0}\\
&
P_n^{(-1,-1)}(\xi)=\frac{\xi^2-1}{4}P_{n-2}^{(1,1)}(\xi)=\frac{n-1}{2(2n-1)} \big(L_n(\xi)-L_{n-2}(\xi) \big), \;\;n \geq 2, \label{eq: K1}
\end{align}
where $L_n(\xi)$ also known as the Legendre polynomial of order $n$. In view of \eqref{GUPeq9}, these polynomials satisfy the following orthogonality properties:
\begin{align}
&\int_{-1}^{1}L_m(\xi)L_n(\xi)\,{\rm d}\xi= \frac{2}{2n+1}\delta_{mn}, \label{eq: Linner}\\ 
& \int_{-1}^{1}P_m^{(-1,-1)}(\xi)P_n^{(-1,-1)}(\xi)(1-\xi^2)^{-1}\,{\rm d}\xi=\frac{n-1}{2n(2n-1)} \delta_{mn}. \label{eq: J11inner}
\end{align}
One prominent feature of the generalized Jacobi polynomials is the derivative property
\begin{equation}\label{eq: derip}
 \big(P_n^{(-1,-1)}\big)'(\xi)=\frac{n-1}{2}L_{n-1}(\xi), \;\; n\geq 2,
\end{equation}
which plays a significant role in constructing the high-order divergence-free spectral bases. 

 \subsection{The divergence-free approximation scheme and its error estimate}
We are concerned with the following curl-curl problem in two and three dimensions
 \begin{subequations}\label{eq:system}
\begin{align}
&\nabla\times\nabla\times {\bs u}+\kappa \, {\bs u}={\bs f}\;\; {\rm in}\;\;\Omega\subset \mathbb{R}^d, \label{eq: 2dsys1}\\
& {\bs n}\cdot \bs u=0,\quad \bs n\times (\nabla\times {\bs u})=0\;\; {\rm on}\;\;\partial\Omega, \label{eq: 2dsys1bc}
\end{align}
\end{subequations}
where $d=2,3$, $\bs u(\bs x)$ usually represents the magnetic field, $\kappa$ is a constant, $\bs f(\bs x)$ is the source term satisfying $\nabla \cdot \bs f=0$, and $\bs n$ is the unit outward vector normal to the boundary $\partial \Omega$. $\nabla \times $ and $\nabla \cdot $ are the usual curl and divergence operators, respectively. 
%
%
Define $\mathbb{X}^{{\rm div}0}(\Omega):=\big \{ \bs u\in  H({\rm curl};\Omega)\cap H_0({\rm div},\Omega),\; \nabla \cdot \bs u=0 \big \}$, where the Sobolev space $H({\rm curl};\Omega)$ and $H_0({\rm div};\Omega)$ are 
\begin{equation*}\label{eq: sob}
\begin{aligned}
&H({\rm curl};\Omega)=\big \{ \bs u\in (L^2(\Omega))^d, \nabla \times \bs u\in (L^2(\Omega))^d  \big \},\;\;
\\
& H_0({\rm div};\Omega)=\big \{\bs u\in (L^2(\Omega))^d, \nabla \cdot \bs u\in L^2(\Omega), \;\; \bs n\cdot \bs u=0 \;{\rm at}\;\partial \Omega  \big \}.
\end{aligned}
\end{equation*}

The weak formulation of equation \eqref{eq:system} reads: find $\bs u \in \mathbb{ X}^{{\rm div}0}(\Omega)$ such that
\begin{equation}\label{eq: weakform}
(\nabla\times \bs u,\nabla\times \bs v)+\kappa (\bs u,\bs v)=(\bs f,\bs v),\quad \forall \bs v\in \mathbb{ X}^{{\rm div}0}(\Omega).
\end{equation}
In what follows, we let $\Omega:=\Lambda^2=(-1,1)^2$. We propose the divergence-free approximation space in two dimensions as follows.
\begin{prop}
$\mathbb{ X}_{N,2d}^{{\rm div}0}(\Lambda^2)$ is a divergence-free approximation space for $\mathbb{ X}^{{\rm div}0}(\Lambda^2)$ taking the form
\begin{equation}\label{eq: div0b2d}
\mathbb{ X}_{N,2d}^{{\rm div}0}(\Lambda^2)={\rm span} \big \{\bs\Phi_{m,n},\; 1\leq m,n\leq N-1\big \},
\end{equation}
where $ {\bs \Phi}_{m,n}(\bs x)=\big( \psi_{m+1}(x_1)\phi_n(x_2),-\phi_m(x_1)\psi_{n+1}(x_2)    \big)^{\intercal}$ and 
\begin{equation}\label{eq: psiphi}
\begin{aligned}
&\psi_{m+1}(\xi)=\frac{\sqrt{2(2m+1)}}{m}P_{m+1}^{(-1,-1)}(\xi),\quad \phi_m(\xi)=\sqrt{\frac{2m+1}{2}}L_m(\xi).
\end{aligned}
\end{equation}
\end{prop}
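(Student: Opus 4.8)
The aim is to verify that \eqref{eq: div0b2d} genuinely defines a subspace of $\mathbb{X}^{{\rm div}0}(\Lambda^2)$ of dimension $(N-1)^2$; the spectral accuracy itself is the business of the error estimate that follows. So the plan breaks into three ingredients: (a) each $\bs\Phi_{m,n}$ is pointwise divergence-free; (b) each $\bs\Phi_{m,n}$ obeys the normal boundary condition $\bs n\cdot\bs u=0$, hence lies in $H(\mathrm{curl};\Lambda^2)\cap H_0(\mathrm{div};\Lambda^2)$; (c) the $(N-1)^2$ fields $\{\bs\Phi_{m,n}\}$ are linearly independent.

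The algebraic heart is the one-dimensional identity $\psi_{m+1}'=\phi_m$ for $m\ge 1$. I would obtain it from the derivative property \eqref{eq: derip}: since $\big(P_{m+1}^{(-1,-1)}\big)'(\xi)=\tfrac{m}{2}L_m(\xi)$, the normalization in \eqref{eq: psiphi} gives $\psi_{m+1}'(\xi)=\tfrac{\sqrt{2(2m+1)}}{m}\cdot\tfrac{m}{2}L_m(\xi)=\tfrac{\sqrt{2(2m+1)}}{2}L_m(\xi)$, and one checks $\sqrt{(2m+1)/2}=\sqrt{2(2m+1)}/2$, so the right-hand side is exactly $\phi_m(\xi)$. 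Granting this, the divergence computes to
\[
\nabla\cdot\bs\Phi_{m,n}=\psi_{m+1}'(x_1)\phi_n(x_2)-\phi_m(x_1)\psi_{n+1}'(x_2)=\phi_m(x_1)\phi_n(x_2)-\phi_m(x_1)\phi_n(x_2)=0
\]
everywhere on $\Lambda^2$, which is the exact divergence-free constraint. For (b) I would use $\psi_k(\pm1)=0$ for $k\ge 2$, immediate from \eqref{eq: K1} because $P_k^{(-1,-1)}$ carries the factor $(\xi^2-1)/4$: on the edges $x_1=\pm1$ one has $\bs n=(\pm1,0)$ and $\bs n\cdot\bs\Phi_{m,n}=\pm\psi_{m+1}(\pm1)\phi_n(x_2)=0$, and on $x_2=\pm1$ one has $\bs n=(0,\pm1)$ and $\bs n\cdot\bs\Phi_{m,n}=\mp\phi_m(x_1)\psi_{n+1}(\pm1)=0$. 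A polynomial vector field is automatically in $H(\mathrm{curl})$, so together with this boundary condition $\bs\Phi_{m,n}\in H_0(\mathrm{div};\Lambda^2)$, and with (a) we conclude $\bs\Phi_{m,n}\in\mathbb{X}^{{\rm div}0}(\Lambda^2)$.

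For (c) I would exploit the tensor-product structure: $\{\psi_{m+1}\}_{m=1}^{N-1}$ are polynomials of pairwise distinct degrees $2,\dots,N$ and $\{\phi_n\}_{n=1}^{N-1}$ of pairwise distinct degrees $1,\dots,N-1$, so each family is linearly independent, whence the first components $\psi_{m+1}(x_1)\phi_n(x_2)$ are linearly independent in $L^2(\Lambda^2)$; thus $\sum_{m,n}c_{mn}\bs\Phi_{m,n}=\bs{0}$ forces every $c_{mn}=0$, giving $\dim\mathbb{X}_{N,2d}^{{\rm div}0}(\Lambda^2)=(N-1)^2$. I do not anticipate a genuine obstacle here: the only delicate point is the bookkeeping of normalization constants in $\psi_{m+1}'=\phi_m$, which is exactly what the generalized Jacobi choice $P_n^{(-1,-1)}$ is engineered to make clean. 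A slicker route worth recording is to observe $\bs\Phi_{m,n}=(\partial_{x_2},-\partial_{x_1})\big(\psi_{m+1}(x_1)\psi_{n+1}(x_2)\big)$, i.e.\ a two-dimensional curl of a scalar stream function vanishing on all of $\partial\Lambda^2$; this renders the divergence-free property and the normal boundary condition automatic, identifies $\mathbb{X}_{N,2d}^{{\rm div}0}(\Lambda^2)$ with the curl of the $H^1_0$ polynomial stream functions of bidegree $\le(N,N)$, and reduces linear independence to injectivity of the scalar curl on functions vanishing on the boundary.
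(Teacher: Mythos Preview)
Your proposal is correct and its core step---deriving $\psi_{m+1}'=\phi_m$ from the derivative relation \eqref{eq: derip} and then reading off $\nabla\cdot\bs\Phi_{m,n}=0$---is exactly the paper's argument. The paper's proof stops there; your additional verifications of the normal boundary condition via $\psi_k(\pm1)=0$ and of linear independence, as well as the stream-function reformulation $\bs\Phi_{m,n}=\nabla\times\big(\psi_{m+1}(x_1)\psi_{n+1}(x_2)\big)$, go beyond what the paper records here (the stream-function identity resurfaces later in the paper's error analysis).
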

\begin{proof}
With the help of  the derivative property \eqref{eq: derip}, one readily verifies that
\begin{equation}\label{eq:psirelaphi}
\psi_{m+1}'(\xi)=\phi_m(\xi),
\end{equation}
from where we obtain $\nabla \cdot \bs\Phi_{m,n}(\bs x)=0.$
\end{proof}
Correspondingly, the approximation scheme for the weak formulation \eqref{eq: weakform} in two dimensions takes the form: find $\bs u_N \in \mathbb{ X}_{N,2d}^{{\rm div}0}(\Lambda^2)$ such that
\begin{equation}\label{eq: weakformdist}
\mathcal{A}(\bs u_N,\bs v):=(\nabla\times {\bs u}_N,\nabla\times {\bs v})+\kappa ({\bs u}_N,\bs v)=({\bs f},\bs v),\quad \forall \bs v\in \mathbb{ X}_{N,2d}^{{\rm div}0}(\Lambda^2).
\end{equation}

In order to conduct the error estimate for the proposed scheme, let us introduce some notation. Let $\mathbb{N}$ be the set of all positive integers, and
let $\mathbb{N}_0=\mathbb{N}\cup \{0\}$. We denote  the $l^1$-norm of
$\bs{\xi}$ in $\mathbb{R}^d$ by $|\bs{\xi}|_1$, and the set of all algebraic polynomials of degree at most $N$ by $\mathcal{P}_N$.  For the sake of simplicity, we also assume $\kappa>0$. Wavenumber-explicit priori estimate and error analysis for curl-curl problem with large negative $\kappa$ can be conducted by Rellich identities, but much more involved. We refer the readers to relevant works for Helmholtz equation (\cite{cummings2006sharp, shen2005spectral, shen2007analysis}) and time-harmonic Maxwell's equations (\cite{hiptmair2011stability, 10.1093/imanum/drx014}).
 
 The bilinear form \eqref{eq: weakform} satisfies the following continuity and coercivity: 
\begin{equation}
\label{Aaoper}\begin{split}
&\mathcal{A}(\bs u,\bs v)
 \leq C\|\bs u\|_{H({\rm curl};\Omega)}\|\bs v\|_{H({\rm curl};\Omega)},\quad  {\rm for}\quad \bs u,\bs v\in \mathbb{X}^{{\rm div}0}(\Omega),\\
&\mathcal{A}(\bs u,\bs u)
 \ge  \alpha\|\bs u\|^2_{H({\rm curl};\Omega)},\qquad \qquad  {\rm for}\quad \bs u\in \mathbb{X}^{{\rm div}0}(\Omega),
\end{split}\end{equation}
where the positive constants $C$ and $\alpha$ are independent of $\bs u.$

By the Helmholtz-Hodge decomposition theory, $\bs u$ can be decomposed  into two components as
\begin{equation}
\bs u =\nabla \times \phi+\nabla \psi,
\end{equation}
which leads to 
\begin{equation}\label{boundary2}
\nabla\cdot\bs u =\Delta \psi,\;\;\;\nabla\times\bs u=\nabla\times\nabla\times\phi.
\end{equation}
By the divergence-free condition, we obtain that $\bs u =\nabla \times \phi.$
We find from \eqref{eq:psirelaphi} that
\begin{equation}\label{anal1}\begin{split}
&\nabla\times\big( \psi_{m+1}(x_1)\phi_n(x_2),-\phi_m(x_1)\psi_{n+1}(x_2)    \big)^{\intercal}
\\&=-\phi_m^\prime(x_1)\psi_{n+1}(x_2) - \psi_{m+1}(x_1)\phi_n^\prime(x_2)
\\&=-\psi_{m+1}^{\prime\prime}(x_1)\psi_{n+1}(x_2) - \psi_{m+1}(x_1)\psi_{n+1}^{\prime\prime}(x_2)
\\&=-\Delta(\psi_{m+1}(x_1)\psi_{n+1}(x_2)),
\end{split}\end{equation}
which implies $\phi|_{\partial\Omega}=0.$

To measure the error, we further define the $d$-dimensional Jacobi-weighted Sobolev space below
\begin{equation*}
B^m(\Omega):=\left\{\phi: \partial_{\boldsymbol{x}}^{\boldsymbol{k}} \phi \in L^2_{\boldsymbol{\omega}^{\boldsymbol{k},\boldsymbol{k}}}(\Omega), 0 \leq|\boldsymbol{k}|_1 \leq m\right\}, \quad \forall m \in \mathbb{N}_0,
\end{equation*}
equipped with the norm and semi-norm
\begin{equation*}
\begin{aligned}
& \|\phi\|_{B^m(\Omega)}=\bigg(\sum_{0 \leq|\boldsymbol{k}|_1 \leq m}\left\|\partial_{\boldsymbol{x}}^k \phi\right\|_{\boldsymbol{\omega}^{k,k}}^2\bigg)^{1 / 2}, \;\;\; |\phi|_{B^m(\Omega)}=\bigg(\sum_{j=1}^d\left\|\partial_{x_j}^m \phi\right\|_{\boldsymbol{\omega}^{m e_j,m e_j}}\bigg)^{1 / 2}.
\end{aligned}
\end{equation*}
We consider the $H^2$-orthogonal projection $\pi^2_N:H^1_0(\Lambda^2)\cap H^2(\Lambda^2)\rightarrow (\mathcal{P}_N)^2\cap H^1_0(\Lambda^2)\cap H^2(\Lambda^2)$, defined by
\begin{equation}\label{Deltaproj}
(\Delta(\phi-\pi^{2}_N \phi),\Delta\varphi)=0,\;\;\;\varphi\in (\mathcal{P}_N)^2\cap H^1_0(\Lambda^2)\cap H^2(\Lambda^2).
\end{equation}
Then, we can derive the following approximation result of $\pi^2_N$.
\begin{lemma}
If $\phi\in H^1_0(\Lambda^2)\cap H^2(\Lambda^2)$ and $\Delta\phi\in B^r(\Lambda^2)$, there holds
\begin{equation}\label{errH2d2} 
\|\Delta(\pi_{N}^2 \phi-\phi)\|_{\Lambda^2}\leq cN^{-r}|\Delta\phi|_{B^r(\Lambda^2)}. 
\end{equation}
\end{lemma}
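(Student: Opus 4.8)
The plan is to convert the defining relation \eqref{Deltaproj} into a best-approximation statement and then to estimate the error of a concrete approximant built from the generalized Jacobi tensor basis.

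\emph{Step 1 (reduction to best approximation).} The symmetric bilinear form $(\Delta\cdot,\Delta\cdot)$ is positive definite on $H^1_0(\Lambda^2)\cap H^2(\Lambda^2)$: if $\Delta u=0$ and $u\in H^1_0(\Lambda^2)$, then $\|\nabla u\|^2_{\Lambda^2}=-(u,\Delta u)=0$, so $u=0$. Hence on the finite-dimensional space $V_N:=(\mathcal P_N)^2\cap H^1_0(\Lambda^2)\cap H^2(\Lambda^2)$ the Gram matrix of this form is invertible, \eqref{Deltaproj} determines $\pi^2_N\phi\in V_N$ uniquely, and $\pi^2_N\phi$ is the orthogonal projection of $\phi$ in the semi-inner product $(\Delta\cdot,\Delta\cdot)$. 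By the Pythagorean identity,
\[
\|\Delta(\pi^2_N\phi-\phi)\|_{\Lambda^2}=\inf_{v\in V_N}\|\Delta(\phi-v)\|_{\Lambda^2},
\]
so it suffices to exhibit one $\tilde\phi_N\in V_N$ with $\|\Delta(\phi-\tilde\phi_N)\|_{\Lambda^2}\le cN^{-r}|\Delta\phi|_{B^r(\Lambda^2)}$. Note also that $V_N=\mathrm{span}\{\psi_{m+1}(x_1)\psi_{n+1}(x_2):1\le m,n\le N-1\}$ — precisely the scalar potentials appearing in \eqref{anal1} — the inclusion being clear and the reverse following from a dimension count.

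\emph{Step 2 (a tensor-product approximant and the one-dimensional core).} The one-dimensional ingredient I would use is the identity
\[
\psi_{m+1}''(\xi)=-\frac{m(m+1)}{1-\xi^{2}}\,\psi_{m+1}(\xi),\qquad m\ge1,
\]
which follows from \eqref{eq: K1}, \eqref{eq: psiphi} and the Jacobi differential equation (equivalently $\psi_{m+1}''\propto P_{m-1}^{(1,1)}$ while $\psi_{m+1}\propto(1-\xi^2)P_{m-1}^{(1,1)}$). In particular $\{v'':v\in\mathcal P_N\cap H^1_0(\Lambda)\}=\mathcal P_{N-2}$, so the one-dimensional projection $\hat\pi_N$ onto $\mathcal P_N\cap H^1_0(\Lambda)$ that minimizes $\|(u-v)''\|_{\Lambda}$ satisfies $(\hat\pi_Nu)''=\hat\Pi_{N-2}u''$, the Legendre $L^2$-projection of $u''$ onto $\mathcal P_{N-2}$; it therefore inherits the sharp one-dimensional Jacobi-weighted estimate $\|(u-\hat\pi_Nu)''\|_{\Lambda}\le cN^{-r}|u''|_{B^r(\Lambda)}$ from \cite{GUO20091011,ShenTangWang2011}, together with corresponding weaker-norm bounds obtained via Poincaré inequalities. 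I would then take $\tilde\phi_N=(\hat\pi_N\otimes\hat\pi_N)\phi\in V_N$, use the standard splitting $\phi-\tilde\phi_N=(I-\hat\pi_N^{(1)})\phi+\hat\pi_N^{(1)}(I-\hat\pi_N^{(2)})\phi$, apply $\Delta=\partial_{x_1}^2+\partial_{x_2}^2$ (using $\partial_{x_1}^2\hat\pi_N^{(1)}=\hat\Pi_{N-2,x_1}\partial_{x_1}^2$ and that $\partial_{x_2}^2$ commutes with $\hat\pi_N^{(1)}$), and bound each resulting term by a one-dimensional estimate in one variable and a stability bound in the other. Summing and matching the Jacobi weights should give $cN^{-r}|\Delta\phi|_{B^r(\Lambda^2)}$, which with Step 1 yields \eqref{errH2d2}.

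\emph{Main obstacle.} The technical heart is Step 2. In one dimension the picture is clean because $\{v'':v\in\mathcal P_N\cap H^1_0(\Lambda)\}=\mathcal P_{N-2}$ is a full polynomial space; in two dimensions the range $\Delta(V_N)$ is \emph{not} a tensor-product polynomial space (the generators $\psi_{m+1}''\otimes\psi_{n+1}+\psi_{m+1}\otimes\psi_{n+1}''$ are not mutually $L^2(\Lambda^2)$-orthogonal, since the $\psi$'s are orthogonal only in the singular weights $(1-\xi^2)^{\pm1}$ and multiplication by $1-\xi^2$ couples neighbouring Jacobi modes). Consequently the error of the tensor-product approximant does not decouple: the cross terms from the splitting must be controlled by hand, the one-dimensional building block has to be chosen so that it is \emph{simultaneously} near-optimal in $L^2$, $H^1$ and the $\|(\cdot)''\|$ seminorm (a merely $\|(\cdot)''\|$-optimal projection loses two derivatives when measured in $L^2$), and the weighted-norm bookkeeping must be arranged so that the estimate closes in terms of $|\Delta\phi|_{B^r(\Lambda^2)}$ rather than a higher-order weighted norm of $\phi$. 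Carrying this out with the one-dimensional Jacobi approximation machinery of \cite{GUO20091011,ShenTangWang2011} is routine in principle but is where the real work of the proof lies.
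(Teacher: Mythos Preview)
Your overall strategy matches the paper's: both build a tensor-product approximant from one-dimensional projections, split the error via $\phi-\tilde\phi_N=(I-\pi^{(1)})\phi+\pi^{(1)}(I-\pi^{(2)})\phi$, and feed each piece into 1D Jacobi estimates. Your Step~1 is in fact cleaner than what the paper does: the paper simply \emph{asserts} that $\pi_N^2=\pi_N^{(1)}\circ\pi_N^{(2)}$ for certain 1D operators, without justifying that the $(\Delta\cdot,\Delta\cdot)$-orthogonal projection factors tensorially (it generally does not). Your best-approximation reduction side-steps this question entirely and lets you insert any $\tilde\phi_N\in V_N$.

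The substantive difference is the 1D building block. The paper takes $\pi_N^{(j)}$ to be the projection from \cite[(4.82)]{ShenTangWang2011} that interpolates both values \emph{and} first derivatives at $\pm1$; this comes with a ready-made simultaneous bound $\|\pi_N^{(j)}\phi-\phi\|_{H^\mu(\Lambda)}\le cN^{\mu-r}(\|\phi\|_{H^2}+\|\partial_x^r\phi\|_{\omega^{r-2,r-2}})$ for all $\mu\in[0,2]$, which dispatches the cross terms in one stroke. Your $\hat\pi_N$ (Dirichlet only, characterised by $(\hat\pi_Nu)''=\hat\Pi_{N-2}u''$) works just as well: since $\hat\pi_Nu-u\in H^1_0(\Lambda)$, Poincar\'e gives $\|\hat\pi_Nu-u\|\le C\|(\hat\pi_Nu-u)'\|\le C\|(\hat\pi_Nu-u)''\|$, so you recover the $L^2$ bound for free and your worry about ``losing two derivatives'' is unfounded. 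With this, your cross-term analysis runs exactly as the paper's does---commute $\partial_{x_i}^2$ past the projection in the other variable, use $L^2$-stability of $\hat\Pi_{N-2}$, then the 1D estimate. The ``main obstacle'' you flag is therefore milder than you suggest; the only genuinely delicate point is the weight bookkeeping needed to land on $|\Delta\phi|_{B^r(\Lambda^2)}$ rather than a mixed-derivative norm of $\phi$, and the paper treats that step loosely too.
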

\begin{proof}
 In view of \eqref{Deltaproj}, we have
\begin{equation}\label{splitd}
\pi_{N}^2=\pi_{N}^{(1)}\circ \pi_{N}^{(2)},
\end{equation}
where the one dimensional orthogonal projection $\pi^{(j)}_N:H^2(\Lambda)\rightarrow\mathcal{P}_N$, $j=1,2$ such that $\pi^{(j)}_N\phi(\pm 1)=\phi(\pm1)=0$ and $(\pi^{(j)}_N\phi)^\prime(\pm 1)=\phi^\prime(\pm1)$. According to \cite[(4.82)]{ShenTangWang2011}, for any $\phi\in H^2(\Lambda)$ and $\partial^2_x\phi\in B^{r-2}_\ast(\Lambda):=\{\phi:\partial^k_x\phi\in L^2_{\omega^{k,k}}(\Lambda),0\leq k\leq r-2\}$, we have
\begin{equation}\label{errH2d1} 
\|\pi_{N}^{(j)} \phi-\phi\|_{H^\mu(\Lambda)}\leq cN^{\mu-r}(\|\phi\|_{H^2}+\|\partial_x^r\phi\|_{\omega^{r-2,r-2}}), \;\;\; \mu\in[0,2].
\end{equation}

Using integration by parts and the Cauchy-Schwarz inequality, we obtain that
\begin{equation}\label{eqx2}
\begin{split}
\|\Delta(\pi_{N}^2\phi-\phi)\|^2_{\Lambda^2}&=\|\partial^2_{x_{1}}(\pi_{N}^2 \phi-\phi)\|^2_{\Lambda^2}+\|\partial^2_{x_{2}}(\pi_{N}^{2} u-u)\|^2_{\Lambda^2}
\\&\quad+2\int_{\Lambda^2}\partial^2_{x_1}(\pi_{N}^{2} \phi-\phi)\partial^2_{x_2}(\pi_{N}^{2}\phi-\phi) {\rm d}\bs x
\\&\leq c\|\partial^2_{x_{1}}(\pi_{N}^2 \phi-\phi)\|^2_{\Lambda^2}+c\|\partial^2_{x_{2}}(\pi_{N}^{2} u-u)\|^2_{\Lambda^2}:=I_1+I_2.
\end{split}
\end{equation}
We only consider $I_1$, since the other terms in $I_2$ can be derived in a similar fashion.
By virtue of \eqref{errH2d1} with $\mu=2$ and \eqref{splitd}, we derive that 
\begin{equation}\begin{split}
\label{eqx1}
I_1&\leq 2\Big(\|\partial^2_{x_1}(\pi_{N}^{(1)} \phi-\phi)\|^2_{\Lambda^2} + \|\partial_{x_1}^2 \pi_{N}^{(1)} \circ(\pi_{N}^{(2)} \phi-\phi) \|^2_{\Lambda^2}  \nonumber
\\&\quad\quad+\|\partial^2_{x_2}(\pi_{N}^{(2)} \phi-\phi)\|^2_{\Lambda^2}+\|\partial_{x_2}^2 \pi_{N}^{(2)} \circ(\pi_{N}^{(1)}\phi-\phi) \|^2_{\Lambda^2}\Big)
\\& \leq c N^{-2r}|\Delta\phi|^2_{B^r(\Lambda^2)}+2\|\pi_{N}^{(2)} (\partial_{x_1}^2\phi)-(\partial_{x_1}^2\phi) \|^2_{\Lambda^2} +\|  \pi_{N}^{(1)} (\partial_{x_2}^2\phi)-(\partial_{x_2}^2\phi)\|^2_{\Lambda^2}
\\& \leq c N^{-2r}|\Delta\phi|^2_{B^r(\Lambda^2)}.
\end{split}\end{equation}
A combination of the above facts leads to the desired result. 
 \end{proof}

\begin{thm}\label{modelproerr}
Let $\bs u, \bs u_{N}$ be respectively the solutions of \eqref{eq: weakform} and \eqref{eq: weakformdist}. If $\bs u\in \mathbb{X}^{{\rm div}0}(\Lambda^2)$ and $\nabla\times\bs u\in B^{r}(\Lambda^2)$ with $r\ge 0$, then we have
\begin{equation}\label{modelorder}
\|\nabla\times(\bs u-\bs u_{N})\|_{\Lambda^2}+\kappa\|\bs u-\bs u_N\|_{\Lambda^2}\leq cN^{-r}|\nabla\times\bs u|_{B^r(\Lambda^2)}.
\end{equation}
where $c$ is a positive constant independent of $r$, $N$, and $\bs u$.
\end{thm}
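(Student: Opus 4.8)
The plan is to follow the standard Céa-type argument for coercive variational problems, converting the abstract quasi-optimality estimate into a concrete convergence rate via the $H^2$-projection estimate just established. First I would invoke Galerkin orthogonality: since $\mathbb{X}_{N,2d}^{\mathrm{div}0}(\Lambda^2)\subset\mathbb{X}^{\mathrm{div}0}(\Lambda^2)$ is a conforming subspace, subtracting \eqref{eq: weakformdist} from \eqref{eq: weakform} gives $\mathcal{A}(\bs u-\bs u_N,\bs v)=0$ for all $\bs v\in\mathbb{X}_{N,2d}^{\mathrm{div}0}(\Lambda^2)$. Combining this with the coercivity and continuity bounds in \eqref{Aaoper}, one obtains the Céa estimate
\begin{equation*}
\|\bs u-\bs u_N\|_{H(\mathrm{curl};\Lambda^2)}\leq \frac{C}{\alpha}\inf_{\bs v_N\in\mathbb{X}_{N,2d}^{\mathrm{div}0}(\Lambda^2)}\|\bs u-\bs v_N\|_{H(\mathrm{curl};\Lambda^2)},
\end{equation*}
so that the error is controlled by the best approximation error in the divergence-free spectral space. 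Since $\kappa>0$ and the left-hand side of \eqref{modelorder} is (up to constants) equivalent to a weighted $H(\mathrm{curl})$ norm, it suffices to bound this infimum.

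Next I would construct an explicit quasi-interpolant $\bs v_N=\Pi_N\bs u$ realizing the rate. The key structural observation, already exploited in \eqref{anal1}, is that for $\bs u=\nabla\times\phi\in\mathbb{X}^{\mathrm{div}0}(\Lambda^2)$ with $\phi|_{\partial\Omega}=0$, one has $\nabla\times\bs u=-\Delta\phi$ and the basis functions $\bs\Phi_{m,n}$ satisfy $\bs\Phi_{m,n}=\nabla\times(\psi_{m+1}(x_1)\psi_{n+1}(x_2))$ with $\nabla\times\bs\Phi_{m,n}=-\Delta(\psi_{m+1}(x_1)\psi_{n+1}(x_2))$. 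Therefore I would define $\Pi_N\bs u:=\nabla\times(\pi_N^2\phi)$ using the $H^2$-orthogonal projection $\pi_N^2$ from \eqref{Deltaproj}; this lies in $\mathbb{X}_{N,2d}^{\mathrm{div}0}(\Lambda^2)$ because $\pi_N^2\phi\in(\mathcal{P}_N)^2\cap H_0^1\cap H^2$ expands in the $\psi_{m+1}(x_1)\psi_{n+1}(x_2)$ basis, and it is automatically divergence-free as the curl of a scalar potential. Then
\begin{equation*}
\|\nabla\times(\bs u-\Pi_N\bs u)\|_{\Lambda^2}=\|\Delta(\phi-\pi_N^2\phi)\|_{\Lambda^2}\leq cN^{-r}|\Delta\phi|_{B^r(\Lambda^2)}=cN^{-r}|\nabla\times\bs u|_{B^r(\Lambda^2)},
\end{equation*}
using the Lemma directly with the hypothesis $\nabla\times\bs u=-\Delta\phi\in B^r(\Lambda^2)$.

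It then remains to control the $L^2$-part $\|\bs u-\Pi_N\bs u\|_{\Lambda^2}=\|\nabla\times(\phi-\pi_N^2\phi)\|_{\Lambda^2}$ by the same rate, and to absorb the factor $\kappa$. A Poincaré/Friedrichs-type inequality on $\Lambda^2$ (valid since $\phi-\pi_N^2\phi\in H_0^1$) gives $\|\nabla(\phi-\pi_N^2\phi)\|_{\Lambda^2}\le c\|\Delta(\phi-\pi_N^2\phi)\|_{\Lambda^2}$, hence the $L^2$ error of the curl is bounded by $cN^{-r}|\nabla\times\bs u|_{B^r(\Lambda^2)}$ as well — indeed, with the dimension-wise split $\pi_N^2=\pi_N^{(1)}\circ\pi_N^{(2)}$ from \eqref{splitd} one can even gain inverse powers of $N$ in the lower-order seminorms from \eqref{errH2d1}, but for \eqref{modelorder} the $O(N^{-r})$ bound suffices. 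Since $\kappa$ is a fixed constant, the factor $\kappa$ on $\|\bs u-\bs u_N\|_{\Lambda^2}$ is harmless and can be folded into $c$ (or, more carefully, the $H(\mathrm{curl})$-equivalence absorbs it). Putting the Céa estimate together with these two approximation bounds yields \eqref{modelorder}.

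The main obstacle I anticipate is the bookkeeping around the potential $\phi$: one must ensure that the Helmholtz decomposition can be chosen so that $\phi\in H_0^1(\Lambda^2)\cap H^2(\Lambda^2)$ (so that $\pi_N^2\phi$ is well-defined and the boundary conditions match), that $\pi_N^2\phi$ genuinely expands in the tensor basis $\{\psi_{m+1}(x_1)\psi_{n+1}(x_2)\}_{1\le m,n\le N-1}$ so that its curl lies in $\mathbb{X}_{N,2d}^{\mathrm{div}0}(\Lambda^2)$, and that the regularity transfer $\nabla\times\bs u\in B^r \Leftrightarrow \Delta\phi\in B^r$ holds with matching seminorms. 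The boundary-condition verification $\phi|_{\partial\Omega}=0$ was already carried out in \eqref{anal1}, which is the crucial point; everything else is a routine combination of Céa's lemma, the Lemma on $\pi_N^2$, and a Poincaré inequality.
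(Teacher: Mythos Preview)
Your proposal is correct and follows essentially the same route as the paper: a C\'ea/Strang quasi-optimality step, the choice $\bs v_N=\Pi_N\bs u=\nabla\times(\pi_N^2\phi)$, the identity $\nabla\times\bs u=-\Delta\phi$, the Lemma on $\pi_N^2$ for the curl part, and a Poincar\'e inequality for the $L^2$ part. The paper phrases the first step via the first Strang Lemma rather than Galerkin orthogonality, but for this conforming setting the two are equivalent.
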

\begin{proof}
Applying the first Strang Lemma (see, e.g., \cite{strang1972variational}) to \eqref{eq: weakformdist}, we can obtain 
\begin{equation*}
\label{AMerr1}\begin{split}
\|\nabla\times(\bs u- \bs u_{N})\|_{\Lambda^2}+\kappa\|\bs u-\bs u_N\|_{\Lambda^2}
 &\lesssim\inf_{ \bs w_{N}\in \mathbb{ X}^{{\rm div}0}_{N,2d} }\|\nabla\times(\bs u- \bs w_{N})\|_{\Lambda^2}+\kappa\|\bs u-\bs w_N\|_{\Lambda^2}. 
\end{split}\end{equation*}
Taking $\bs w_{N}= \Pi_N\bs u(\bs x)=\nabla\times\pi^2_N\phi$ in above, we obtain from \eqref{boundary2}, \eqref{errH2d2}, and Poincar\'e inequality that
\begin{equation*}\begin{split}
& \|\nabla\times(\bs u-\Pi_N\bs u)\|_{\Lambda^2}+\kappa\|\bs u-\Pi_N\bs u\|_{\Lambda^2}
\\&= \|\nabla\times\nabla\times (\phi- \pi^2_N\phi)\|_{\Lambda^2}+\kappa\|\nabla\times (\phi-\pi^2_N\phi)\|_{\Lambda^2}
\\&\leq \|\nabla\times\nabla\times( \phi-\pi^2_N \phi)\|_{\Lambda^2}
= \|\Delta(\phi-\pi^{2}_N \phi)\|_{\Lambda^2}
\\&\leq cN^{-r}|\Delta\phi|_{B^r(\Lambda^2)}=cN^{-r}|\nabla\times\bs u|_{B^r(\Lambda^2)}. 
\end{split} \end{equation*} 
This ends the proof.
\end{proof}

\subsection{Solution algorithm based on fast diagonalizable auxiliary problem}\label{sec2.3}
In this subsection, we propose a highly efficient solution algorithm for the divergence-free spectral approximation scheme \eqref{eq: weakformdist}, which draws inspirations from the fast diagonalisation method (cf. \cite{buzbee1970direct, haidvogel1979accurate,Shen94b}) for Poisson-type equations and the matrix-free preconditioned Krylov subspace iterative method.

\begin{prop}
Denote symmetric matrices $M=(M_{mn})$, $I=(I_{mn})$ and $S=(S_{mn})$, where ${1\leq m,n\leq N-1}$ and the entries are given by  


\begin{align}
& M_{mn}=\big( \psi_{n+1},\psi_{m+1}\big)=
\begin{cases}
 \dfrac{1}{2n+1}\big( \dfrac{1}{2n-1}+\dfrac{1}{2n+3}  \big),\quad & m=n,\\[5pt]
 - \dfrac{1}{\sqrt{2n+1}}\dfrac{1}{\sqrt{2n+5}}\dfrac{1}{2n+3}     , \quad & m=n+2,\\[5pt]
0,\quad & {\rm otherwise};
\end{cases} \label{eq: M}\\
&      I_{mn}=\big(\phi_n,\phi_m \big )=\delta_{mn}; \label{eq: I}  \\
&S_{mn}=\big(\phi_n',\phi_m'\big)=\frac{1}{4}{\sqrt{(2m+1)(2n+1)}}\, n(n+1)\big(1+(-1)^{m+n}  \big),\;\; m\geq n. \label{eq: S}
\end{align}
\end{prop}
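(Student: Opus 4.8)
The plan is to reduce each matrix entry to an integral of Legendre polynomials or their derivatives and then apply the single orthogonality relation \eqref{eq: Linner}. The first step is to rewrite the two basis families in pure Legendre form: inserting the closed form \eqref{eq: K1} (evaluated at index $m+1$, whereupon the coefficient $\tfrac{m}{2(2m+1)}$ produced by \eqref{eq: K1} cancels the $1/m$ factor in the definition \eqref{eq: psiphi} of $\psi_{m+1}$) gives
\begin{equation*}
\psi_{m+1}(\xi)=\frac{1}{\sqrt{2(2m+1)}}\bigl(L_{m+1}(\xi)-L_{m-1}(\xi)\bigr),\qquad \phi_m(\xi)=\sqrt{\tfrac{2m+1}{2}}\,L_m(\xi).
\end{equation*}
The identity $I_{mn}=\delta_{mn}$ is then immediate from \eqref{eq: Linner}, since the prefactor $\tfrac12\sqrt{(2m+1)(2n+1)}$ cancels the constant $2/(2n+1)$. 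Symmetry of all three matrices is obvious from the $L^2$ inner-product form, so it suffices to treat the case $m\ge n$.

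For $M_{mn}=(\psi_{n+1},\psi_{m+1})$, expand the product $(L_{n+1}-L_{n-1})(L_{m+1}-L_{m-1})$ and integrate termwise using \eqref{eq: Linner}. Orthogonality of Legendre polynomials of distinct index forces the integral to vanish unless the index sets $\{m-1,m+1\}$ and $\{n-1,n+1\}$ intersect, that is, unless $|m-n|\in\{0,2\}$; this yields the three-case structure of \eqref{eq: M}. When $m=n$, the surviving diagonal terms give $2/(2n+3)+2/(2n-1)$; when $m=n+2$, only the cross term $-\int_{-1}^1 L_{n+1}^2\,{\rm d}\xi=-2/(2n+3)$ survives. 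Multiplying by the normalisation constants produces exactly the entries in \eqref{eq: M}.

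For $S_{mn}=(\phi_n',\phi_m')$ the needed ingredient is the expansion of a Legendre derivative in lower-order Legendre polynomials, $L_m'=\sum_{0\le k\le m-1,\; m-k\ \mathrm{odd}}(2k+1)L_k$. With $m\ge n$, applying \eqref{eq: Linner} collapses $\int_{-1}^1 L_m'L_n'\,{\rm d}\xi$ to $2\sum(2k+1)$ summed over those $k\le n-1$ for which $m-k$ and $n-k$ are both odd; this index set is empty when $m+n$ is odd, and when $m+n$ is even it equals $\{n-1,n-3,\dots\}$, on which the elementary arithmetic-series identity $2\sum_k(2k+1)=n(n+1)$ holds. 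Hence $\int_{-1}^1 L_m'L_n'\,{\rm d}\xi=\tfrac12 n(n+1)\bigl(1+(-1)^{m+n}\bigr)$, and multiplying by $\tfrac12\sqrt{(2m+1)(2n+1)}$ from the definition of $\phi_m$ gives \eqref{eq: S}. The only mildly delicate step in the entire proof is this last piece of bookkeeping — tracking the parity constraints and evaluating the finite sum — while everything else is a direct substitution into \eqref{eq: Linner}.
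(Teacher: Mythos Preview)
Your argument is correct. For $I$ and $M$ it matches the paper's approach essentially verbatim (the paper simply cites \cite{Shen94b} and points to \eqref{eq: Linner} and the recurrence \eqref{eq: reccu}, while you spell out the expansion of $(L_{n+1}-L_{n-1})(L_{m+1}-L_{m-1})$).

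For $S$ your route is genuinely different from the paper's. The paper integrates by parts once,
\[
\int_{-1}^1 L_m'L_n'\,{\rm d}\xi=[L_n'L_m]_{-1}^1-\int_{-1}^1 L_n''L_m\,{\rm d}\xi,
\]
observes that the remaining integral vanishes by orthogonality (since $L_n''\in\mathcal P_{m-2}$ for $m\ge n$), and then reads off the boundary term using $L_m(\pm1)=(\pm1)^m$ and $L_n'(\pm1)=\tfrac12(\pm1)^{n-1}n(n+1)$. You instead invoke the full Legendre expansion of $L_m'$ and reduce the computation to an arithmetic series. Both yield the same closed form; the paper's version is shorter and avoids the parity bookkeeping and the summation identity, whereas your version is self-contained in the sense that it does not appeal to the explicit endpoint value of $L_n'$.
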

\begin{proof}
The analytic expressions of $M_{mn}$ and $I_{mn}$ in equations \eqref{eq: M}-\eqref{eq: I} have been given in \cite{Shen94b} and can be easily derived with the help of the orthogonality of Legendre polynomials in equation \eqref{eq: Linner} and the recurrence relation
\begin{equation}\label{eq: reccu}
(2n+1)L_n(\xi)=L_{n+1}'(\xi)-L_{n-1}'(\xi).
\end{equation} 
Next, we look into the derivation of $S$. Since $S$ is symmetric, we consider only $m\geq n$. By integration by parts, one obtains 
\begin{equation}\label{eq: Smiddle}
S_{mn}=\frac{\sqrt{(2m+1)(2n+1)}}{2} \Big\{ [L_n'(\xi)L_m(\xi)]\big|_{-1}^1-\int_{-1}^1 L_n''(\xi)L_m(\xi)\,{\rm d}\xi  \Big\}.
\end{equation}
By the fact that when $m\geq n$, $L_n''(\xi) \in \mathcal{P}_{m-2}$, thus $L_n''(\xi)$ can be expanded by a linear combination of $\{L_k(\xi) \}_{k=0}^{m-2}$. Thus, the last term in the above equation vanishes due to orthogonality \eqref{eq: Linner}. Note that the boundary values of Legendre polynomials satisfy
\begin{equation}\label{eq: bcL}
L_n(\pm 1)=(\pm 1)^n, \quad L_n'(\pm 1)=\frac{1}{2}(\pm 1)^{n-1}n(n+1).
\end{equation}
Inserting equation \eqref{eq: bcL} into equation \eqref{eq: Smiddle} directly leads to the analytic expression \eqref{eq: S}.
\end{proof}

One can expand the numerical solution $\bs u_N(\bs x)$ of the approximation scheme \eqref{eq: weakformdist} by the divergence-free basis functions $\bs u_N(\bs x)=\sum_{m,n=1}^{N-1} u_{mn} \bs \Phi_{mn}(\bs x),$
and denote
\begin{equation*}\label{eq: umatrix}
f_{mn}=\big(\bs f,\bs \Phi_{mn}\big),\quad F=(f_{mn})_{m,n=1,\cdots, N-1},\quad U=(u_{mn})_{m,n=1,\cdots, N-1}.
\end{equation*}
Taking $\bs v(\bs x)=\bs \Phi_{\hat m \hat n}(\bs x)$ in equation \eqref{eq: weakform} for $\hat m,\hat n=1,\cdots, N-1$,  and noticing the definitions of matrices $M$, $S$ and $I$ in equations \eqref{eq: M}-\eqref{eq: S}, one arrives at the following matrix equation 
\begin{equation}\label{eq: meq2d}
SUM+2 IUI + MUS+\kappa \big( MUI+IU M  \big)=F.
\end{equation}

One can remap the unknown matrix $U$ and source term matrix $F$ into vector forms ${\rm vec}(U)$ and ${\rm vec}( F)$, following a natural index ordering as follows:
\begin{equation*}\label{eq: 2Dlong}
\{ ({\rm vec}(U))_p, ({\rm vec}( F) )_p \}_{p=1,\cdots,(N-1)^2}=\{ u_{mn}, f_{mn} \}_{m,n=1}^{N-1}, \;\; p=m+(N-1)(n-1).
\end{equation*}
The matrix equation \eqref{eq: meq2d} can be written equivalently into a linear system $A\,{\rm vec}(U)$ $={\rm vec}( F)$, with the global matrix $A$ be assembled by 
\begin{equation}\label{eq: Aform}
A:=M \otimes S+ S \otimes M+2 I\otimes I +\kappa (I \otimes M+ M \otimes I).
\end{equation}

In order to design fast solution algorithm, it is necessary to look into the sparsity of the global matrix $A$. 
Though $M$ is penta-diagonal, due to the dense nature of the stiffness matrix $S$, the resultant global matrix $A$ is a dense matrix. Thus, direct solvers based on fast factorization \cite{golub2013matrix, trefethen2022numerical} that perform well for sparse matrix will no longer work.  Generally speaking, matrix-free Krylov subspace iteration methods \cite{kelley1995iterative, saad2003iterative} would be the method of choice for solving such large dense linear systems formed by summation of Kronecker products of sub-matrices, due to significant reduction on the number of floating point operations needed for operator evaluation and memory comsumption,  compared with solving the fully assembled system. Nevertheless, these iterative methods would encounter severe convergence issue for ill-conditioned system, arising from high-order spectral discretisation. 

On the other hand, it is known that matrix diagonalisation method \cite{buzbee1970direct, haidvogel1979accurate,Shen94b} is an efficient solution algorithm for  solving Poisson-type equations. The essential idea of this technique resides in that a symmetric matrix and an identity matrix can be simultaneously diagonalised by the same orthonormal matrix. However, it is highly non-trivial and seems impoissble to extend this idea to the current problem as it involves the simultaneous diagonalisation of multiple matrices $M$, $S$ and $I$. In what follows, we show how to link these matrices through two auxiliary eigenvalue problems and reveal the fact that these three matrices are almost simultaneously diagonalizable.

%

 To facilitate the development of the efficient algorithm for solving matrix equation \eqref{eq: meq2d}, one needs to resort to the following two auxiliary eigenvalue problems:
\begin{equation}\label{eq: biha}
y^{(4)}(x)=\lambda y(x), \;\; x\in \Lambda,\quad y(\pm1)=0,\;\; y''(\pm 1)=0,
\end{equation}
and
\begin{equation}\label{eq: Lap}
-w^{(2)}(x)=\mu w(x), \;\; x\in \Lambda,\quad w(\pm1)=0.
\end{equation}
Their weak formulations are as follows: 
\begin{itemize}
\item the biharmonic eigenvalue problem: find $y(x)\in \mathring{H}^2(\Lambda):=\big \{u\in H^2(\Lambda),$ $u(\pm 1)=0\big\}$ and $\lambda\in \mathbb{R}$, such that
\begin{equation}\label{eq: wkbih}
(y'', v'')_{L^2(\Lambda)}=\lambda(y,v)_{L^2(\Lambda)},\;\; \forall \,v(x)\in \mathring{H}^2(\Lambda), 
\end{equation}
\item the Laplace eigenvalue problem: find $w(x)\in H^1_0(\Lambda)$ and $\mu \in \mathbb{R}$, such that
\begin{equation}\label{eq: wkLap}
(w', v')_{L^2(\Lambda)}=\mu(w,v)_{L^2(\Lambda)},\;\; \forall \, v(x)\in H^1_0(\Lambda).
\end{equation}
\end{itemize}
Let us define the approximation space $\mathbb{Y}_N:={\rm span}\{ \psi_{m+1}(x) \}_{m=1}^{N-1}$ for equations \eqref{eq: wkbih} and \eqref{eq: wkLap}, the related approximation schemes for these two problems are to 
\begin{itemize}
\item find $y_{N}\in \mathbb{Y}_N$ and $\lambda_{h} \in \mathbb{R}$, such that
\begin{equation}\label{eq: wkbih1}
(y_{N}'', v'')_{L^2(\Lambda)}=\lambda_{h}(y_{N},v)_{L^2(\Lambda)},\;\; \forall v(x)\in \mathbb{Y}_N, 
\end{equation}
\item and find $w_{N}\in \mathbb{Y}_N$ and $\mu_{h} \in \mathbb{R}$, such that
\begin{equation}\label{eq: wkLap1}
(w_{N}', v')_{L^2(\Lambda)}=\mu_{h}(w_{N},v)_{L^2(\Lambda)},\;\; \forall v(x)\in \mathbb{Y}_N.
\end{equation}
\end{itemize}
Expanding $y_{N}(x)$ and $w_{N}(x)$ by
\begin{equation}\label{eq: YN}
y_{N}(x)=\sum_{n=1}^{N-1} y_n \psi_{n+1}(x),\qquad w_{N}(x)=\sum_{n=1}^{N-1} w_n \psi_{n+1}(x),
\end{equation}
and taking $v(x)=\psi_{m+1}(x)$ for $m=1,\cdots, N-1$ in \eqref{eq: wkbih1} and \eqref{eq: wkLap1}, respectively, one obtains
\begin{align}
& \sum_{n=1}^{N-1}\big( \psi_{n+1}'', \psi_{m+1}'' \big)_{L^2(\Lambda)}y_{n,i}=\lambda_{h,i} \sum_{n=1}^{N-1} \big(\psi_{n+1}, \psi_{m+1} \big)_{L^2(\Lambda)}y_{n, i}, \label{eq: YN1} \\
& \sum_{n=1}^{N-1}\big( \psi_{n+1}', \psi_{m+1}' \big)_{L^2(\Lambda)}w_{n,i}=\mu_{h,i} \sum_{n=1}^{N-1} \big(\psi_{n+1}, \psi_{m+1}\big)_{L^2(\Lambda)}w_{n, i},\label{eq: WN1}
\end{align}
where $\lambda_{h,i}$ and $\mu_{h,i}$ are the $i$-th  eigenvalues of the generalized matrix eigenvalue problems \eqref{eq: YN1} and \eqref{eq: WN1}, respectively, and $y_{n,i}$ and $w_{n,i}$ are respectively the $n$-th expansion coefficients of the $i$-th eigenvector under the basis $\{ \psi_{n+1}(x) \}_{n=1}^{N-1}$ of equations \eqref{eq: YN1} and \eqref{eq: WN1}. Aware of the derivative property \eqref{eq:psirelaphi} and the definition of matrix $M$,$S$, and $I$ in equations \eqref{eq: M}-\eqref{eq: S}, the above equations \eqref{eq: YN1}-\eqref{eq: WN1} lead to  the following generalized matrix eigenvalue problems as follows:
\begin{equation}\label{eq: meigen2}
I E =M E  \Theta,\quad SE_1=ME_1 \Theta_1.
\end{equation}
Here, $E,$ $E_1$ are orthonormal matrices and $\Theta$ and $\Theta_1$ are diagonal matrices of size $(N-1)$-by-$(N-1)$, respectively, defined by
$(E)_{ni}=w_{n,i},$ $(E_1)_{ni}=y_{n,i}$ and 
\begin{equation}\label{eq: ELamb}
\Theta={\rm diag}(\mu_{h,1},\cdots, \mu_{h,N-1}),\quad  \Theta_1={\rm diag}(\lambda_{h,1},\cdots,\lambda_{h,N-1}).
\end{equation}

It is crucial to observe that the eigenvalues and eigenvectors of equations \eqref{eq: biha} and \eqref{eq: Lap} have analytic expressions and are closely related to each other as given below: for $i=1,\cdots, N-1$,
\begin{equation}
\label{eq: analeigen}
y_i(x)=w_i(x)=\sin\big( { i}{\pi}(x+1)/{2} \big), \;\;  \mu_i=\big({ i}{\pi}/{2} \big)^2,  \;\; \lambda_i=\mu_i^2.
\end{equation} 
Thus, one can infer readily that $E_1 \approx E$ and $\Theta_1\approx \Theta^2$, and as a result, $MS\approx SM \approx I$.  Note that if $M$ and $S$ commute, i.e. $MS=SM$, there exists a set of common orthonormal eigenvectors such that $M$ and $S$ are simultaneously diagonalizable (see e.g. \cite{lax2007linear} p.74, Theorem 14). Next, we show how close $MS$ and $SM$  to the identity matrix in the lemma below.
\begin{lm}\label{prop: MS} $MS$ and $SM$ are mutually transposed matrices of size $(N-1)$-by-$(N-1)$, with the following analytic expression:
\begin{equation} \label{eq: MS}\begin{split}
&(SM)_{nm}=(MS)_{mn}\\&
=\begin{cases}
\delta_{mn},\quad & 1\leq m\leq N-3,\\[5pt]
 \delta_{(N-2)n}+\dfrac{1}{4}\sqrt{\dfrac{2n+1}{2N-3} } \dfrac{n(n+1)}{2N-1} \Big( 1+(-1)^{n+N}  \Big)   , \;\; & m=N-2,\\[5pt]
 \delta_{(N-1)n}+\dfrac{1}{4}\sqrt{\dfrac{2n+1}{2N-1} } \dfrac{n(n+1)}{2N+1} \Big( 1+(-1)^{n+N+1}  \Big),\;\; & m=N-1.
\end{cases}
\end{split}\end{equation}
\end{lm} 
\begin{proof}
We postpone the detailed proof in Appendix \ref{AppendixA0}.
\end{proof}

It can be observed from Lemma \ref{prop: MS} that $MS$ is a lower triangular matrix, with its first  $(N-3)$-by-$ (N-3)$ sub-matrix been identity. This inspires us to approximate $S$ by $M^{-1}$.  Note that from equation \eqref{eq: meigen2}, one has
\begin{equation}\label{eq: Mdiag}
M=E \Theta^{-1} E^{\intercal}=E D E^{\intercal},
\end{equation}
where $D={\rm diag}(d_1,\cdots,d_{N-1}):=\Theta^{-1}$ is a diagonal matrix with its main diagonal entries the eigenvalues of $M$ and $E$ is the corresponding orthonormal eigenvector matrix. Correspondingly, $S$ is approximated by
\begin{equation}\label{eq: Sapprox}
S\approx M^{-1}=E D^{-1} E^{\intercal}. 
\end{equation}
\vspace{-16pt}\begin{rem}\label{rem: Sapprox} {\em 
The numerical eigenvalues/eigenvectors of $M$ and $S$ in equations \eqref{eq: meigen2}, i.e. $E, E_1$ and $\Theta, \Theta_1$, respectively, could also be approximated by their analytic counterparts. To be more specific, $y_{n,i}$, $w_{n,i}$ and $\lambda_{h,i}$ and $\mu_{h,i}$ can be approximated by $\lambda_{h,i}\approx \lambda_i= \big(\frac{\pi}{2}i \big)^4,$ $\mu_{h,i}\approx \mu_i= \big(\frac{\pi}{2}i \big)^2, 
$ $y_{n,i}\approx w_{n,i}\approx \hat y_{n,i}=\hat w_{n,i},$ and
\begin{equation}\label{eq:approxy}
\sum_{n=1}^{N-1}  \hat y_{n,i} \psi_{n+1}(x)=\sum_{n=1}^{N-1} \hat  w_{n,i} \psi_{n+1}(x)\approx \sin\Big(\frac{\pi}{2} i(x+1) \Big),\;\; i=1,\cdots N-1. 
\end{equation}
By taking derivative on both sides of the above equation, using equation \eqref{eq:psirelaphi} and the orthogonality of Legendre polynomials under the $L^2$ inner product, one readily verifies
\begin{equation}\label{eq: ynj}
\hat y_{n,j}=\hat w_{n,j}=\pi \sqrt{\frac{(2n+1)j}{2}}  \cos \Big(\frac{\pi}{2}(j+n)   \Big)  J_{n+\frac{1}{2}}\Big(\frac{\pi}{2}j \Big).
\end{equation}
Here, $J_n(r)$ is the Bessel function of the first kind and the following identity has been used
\begin{equation}\label{eq: sbessel}
\int_{-1}^1 e^{\ri r x}L_n(x)dx=\ri^n \sqrt{\frac{2\pi}{r}}J_{n+1/2}(r),
\end{equation}
where $\ri$ is the complex unit. 
Thus, define $\hat D={\rm diag}(\hat d_1,\cdots, \hat d_{N-1}),$ $\hat d_j=\big(\frac{\pi}{2}i\big)^{-2}$, and 
\begin{equation}\label{eq: hatE}
(\hat E)_{ij}=\pi \sqrt{\frac{(2i+1)j}{2}}  \cos \Big(\frac{\pi}{2}(j+i)   \Big)  J_{i+\frac{1}{2}}\Big(\frac{\pi}{2}j \Big). 
\end{equation}
$M$ and $S$ could also be approximated by
\begin{equation}\label{eq: MSanaAp}
M \approx \hat E \hat D \hat E^{\intercal}, \quad S \approx \hat E \hat D^{-1} \hat E^{\intercal}.
\end{equation}
However, further numerical evidence illustrates the deficiency of using the above approximation, compared with using the approximation in equation \eqref{eq: Sapprox}.}
\end{rem}

 \begin{figure}[tbp]
\begin{center}
  \subfigure[Eigenvalues of $S$: exact vs approximation ]{ \includegraphics[scale=.43]{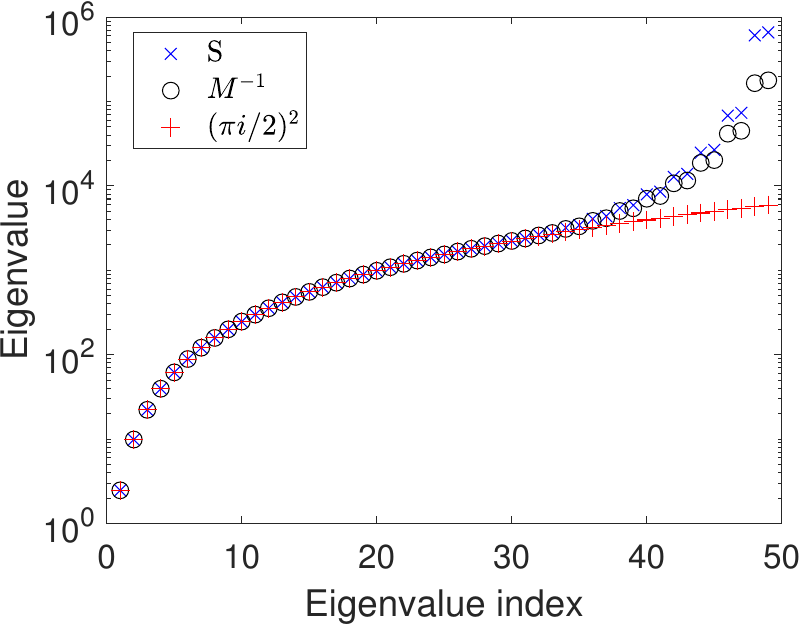}}\qquad
  \subfigure[Eigenvectors of $S$: exact vs approximation  ]{ \includegraphics[scale=.43]{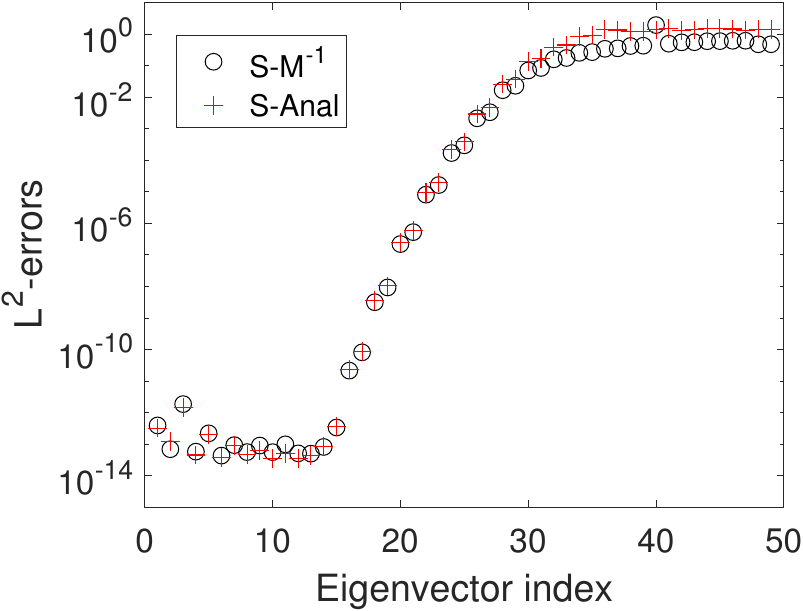}}
    \caption{\small Comparisons of the exact eigenvalues and eigenvectors of $S$ with their approximations. 
(a) Eigenvalues of $S$ (in ascending order) as a function of the eigenvalue index $i$ and compare them with the eigenvalues of $M^{-1}$ and the analytic expression ${(\pi i/2)^2}$; (b) the differences in $l^2$-norm between the associated eigenvectors of $S$ and  $E$ (the eigenvector matrix of $M^{-1}$) and the analytic expression $\hat E$ in equation \eqref{eq: hatE}.   } 
   \label{figs: Sapprox}
\end{center}
\end{figure}

In order to quantify the accuracy of the approximations \eqref{eq: Sapprox} and \eqref{eq: MSanaAp}, let us denote the eigenvalues of S by $\{ s_i \}$ (in ascending order) and the corresponding orthonormal eigenvector matrix by $E^S$.
In Fig.\,\ref{figs: Sapprox} (a), we fix the polynomial order $N=50$ and depict the spectrum of $S$ and $M^{-1}$, as a function of the eigenvalue index $i$ in ascending order. The analytic formula $(\pi i/2)^2$ obtained from equation \eqref{eq: analeigen} is also included for comparison. One can observe that the differences of the top $70\%$ of these eigenvalue pairs are negligible.  While for the remaining $30\%$ of the eigenvalues, the result obtained from $M^{-1}$ is obviously better than that from the analytic formula  $(\pi i/2)^2$. In Fig.\,\ref{figs: Sapprox} (b), the difference in $l^2$-norm of the eigenvectors between $S$ and $M^{-1}$ and the analytic formula in \eqref{eq: MSanaAp}, i.e., $\|E^S_{:,i}-E_{:,i}\|_2$ and  $\|E^S_{:,i}-\hat E_{:,i}\|_2$ are depicted as a function of the eigenvector index $i$. It can be seen that the accuracy of both approximations to the top $30\%$ of the eigenvectors is up to the machine roundoff error.  While for the rest of the $70\%$ of the eigenvectors, the approximation errors gradually increase from $10^{-14}$ to $10^{-1}$ for both approximations. Moreover, it can be observed clearly that in general,  $\|E^S_{:,i}-E_{:,i}\|_2$ is smaller than $\|E^S_{:,i}-\hat E_{:,i}\|_2$.  These numerical evidences demonstrate that $M^{-1}$ serves as a viable approximation of $S$, especially for the low and medium frequency modes.  This observation plays a significant role in  the development of the proposed method. 

In what follows, we construct an auxiliary matrix equation by replacing $S$ by $M^{-1}$ in equation \eqref{eq: meq2d} and show that this matrix equation is fully diagonalizable and easy to compute. 
\begin{prop}\label{prop: aux2d}
Define an auxiliary matrix equation for the original system \eqref{eq: meq2d} as
\begin{equation}\label{eq: meq2daux}
M^{-1}UM+2 IUI + MUM^{-1}+\kappa \big( MUI+IU M  \big)=F,
\end{equation}
which can be written equivalently as the following linear system
\begin{equation}\label{eq: tAform}
\tilde A\,{\rm vec}( U)={\rm vec}( F),
\end{equation}
with $\tilde A:=M \otimes M^{-1}+ M^{-1} \otimes M+2 I\otimes I +\kappa (I \otimes M+ M \otimes I).$ 
Here, $M$ is the penta-diagonal matrix defined in equation \eqref{eq: M} and can be diagonalized by \eqref{eq: Mdiag}. The solution $U$ takes the form 
\begin{equation}\label{eq:U2dsolu}
U=EVE^{\intercal},\quad \text{with}\,\,\,\,   V_{ij}=\frac{(E^{\intercal} F E)_{ij}}{(d_i/d_j+d_j/d_i+2)+\kappa (d_i+d_j) }.  
\end{equation}
\end{prop}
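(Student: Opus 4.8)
The plan is to verify directly that the matrix equation \eqref{eq: meq2daux} is equivalent to the linear system \eqref{eq: tAform} via the standard Kronecker-product identity, and then to diagonalize $\tilde A$ by exploiting the single eigendecomposition $M=EDE^{\intercal}$ with $E$ orthonormal. The crucial structural point is that every matrix appearing in the auxiliary equation --- namely $M$, $M^{-1}=ED^{-1}E^{\intercal}$, and $I=EE^{\intercal}$ --- is diagonalized by the \emph{same} orthonormal matrix $E$; this is precisely the advantage bought by replacing $S$ with $M^{-1}$.

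\medskip
\noindent\textbf{Step 1: reduction to a linear system.} First I would recall the identity $\mathrm{vec}(BUC^{\intercal})=(C\otimes B)\,\mathrm{vec}(U)$ under the chosen column-major ordering $p=m+(N-1)(n-1)$. Applying this termwise to \eqref{eq: meq2daux}, using the symmetry of $M$, $M^{-1}$, and $I$ (so $C^{\intercal}=C$ throughout), yields $\mathrm{vec}(M^{-1}UM)=(M\otimes M^{-1})\mathrm{vec}(U)$, $\mathrm{vec}(MUM^{-1})=(M^{-1}\otimes M)\mathrm{vec}(U)$, $\mathrm{vec}(IUI)=(I\otimes I)\mathrm{vec}(U)$, $\mathrm{vec}(MUI)=(I\otimes M)\mathrm{vec}(U)$, and $\mathrm{vec}(IUM)=(M\otimes I)\mathrm{vec}(U)$. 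Summing gives exactly $\tilde A\,\mathrm{vec}(U)=\mathrm{vec}(F)$ with $\tilde A$ as stated. This step is entirely routine.

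\medskip
\noindent\textbf{Step 2: simultaneous diagonalization and the closed-form solution.} Substitute $M=EDE^{\intercal}$, $M^{-1}=ED^{-1}E^{\intercal}$, $I=EE^{\intercal}$ into $\tilde A$. Using $(E\otimes E)(X\otimes Y)(E\otimes E)^{\intercal}=(EXE^{\intercal})\otimes(EYE^{\intercal})$ and $(E\otimes E)^{\intercal}(E\otimes E)=I\otimes I$, one obtains
\begin{equation*}
(E\otimes E)^{\intercal}\,\tilde A\,(E\otimes E)=D\otimes D^{-1}+D^{-1}\otimes D+2\,I\otimes I+\kappa\,(I\otimes D+D\otimes I),
\end{equation*}
which is diagonal because $D$ is diagonal. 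Its $((i,j),(i,j))$ entry is $d_i/d_j+d_j/d_i+2+\kappa(d_j+d_i)$ --- here the $(i,j)$ block index corresponds to row/column $i$ of the first tensor factor acting on $D^{-1}$ and column $j$ on $D$ (consistent with $U\mapsto EVE^{\intercal}$ acting as $V_{ij}$). Since $\kappa>0$ and each $d_i>0$ (eigenvalues of the SPD matrix $M$), every diagonal entry is strictly positive, so $\tilde A$ is invertible. Setting $\mathrm{vec}(U)=(E\otimes E)\,\mathrm{vec}(V)$, i.e. $U=EVE^{\intercal}$, the system decouples into the scalar equations whose solution is $V_{ij}=(E^{\intercal}FE)_{ij}\big/\big(d_i/d_j+d_j/d_i+2+\kappa(d_i+d_j)\big)$, which is \eqref{eq:U2dsolu}.

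\medskip
\noindent\textbf{The main obstacle} is purely bookkeeping: getting the index ordering in the Kronecker products consistent with the $\mathrm{vec}$ convention and with the placement of $M$ versus $M^{-1}$ on the left and right of $U$, so that the denominator in \eqref{eq:U2dsolu} carries $d_i/d_j$ rather than its transpose. Once the convention $\mathrm{vec}(BUC)=(C^{\intercal}\otimes B)\mathrm{vec}(U)$ is fixed and applied consistently, everything collapses to a diagonal solve; the positivity of the denominator (hence solvability) follows immediately from $M\succ 0$ and $\kappa>0$. I would also remark that forming $U=EVE^{\intercal}$ costs two $(N-1)\times(N-1)$ matrix products plus the entrywise scaling, i.e. $O(N^3)$ work, which is what makes the auxiliary problem an attractive preconditioner.
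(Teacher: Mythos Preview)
Your proof is correct and follows essentially the same approach as the paper: substitute the eigendecomposition $M=EDE^{\intercal}$ and reduce to a diagonal system via the change of variables $U=EVE^{\intercal}$. The paper's proof is terser---it works directly at the level of the matrix equation rather than the Kronecker/$\mathrm{vec}$ form and omits your Step~1 and the invertibility remark---but the substance is identical.
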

\begin{proof}
Inserting equation \eqref{eq: Mdiag} into equation \eqref{eq: meq2daux} and introducing the auxiliary matrix $V=E^{\intercal} U E$, one readily obtains
\begin{equation*}
D^{-1} V D+DVD^{-1}+2V+\kappa (DV+VD)=E^{\intercal} F E,
\end{equation*}
which directly leads to the desired formula \eqref{eq:U2dsolu}.
\end{proof}

A comprehensive numerical investigation of the spectrum of ${\tilde A}^{-1} A$ leads to the following observations.
\begin{enumerate}
\item[(i)] The eigenvalues of $A$ spread widely in between $-10^4$ to $10^7$, while that of ${\tilde A}^{-1} A$ is confined in between $1$ to $40$ and more than $96\%$ of the eigenvalues of $\tilde A^{-1} A$ are clustered at $1$, computed with a fixed $N=120$ and various $\kappa=-2000,-5000,-10000$, as can be seen in Fig.\,\ref{figs: preeigdis}.
\item[(ii)] The differences in $l^{\infty}$-norm between the first $13689$ eigenvalues (i.e. $(120-3)^2$) and 1 is negligible, and the differences jump abruptly to $O(1)$ for the rest of the eigenvalues,  indicating that there is an extremely large invariant subspace associated with the dominate eigenvalue $1$, as shown in Fig.\,\ref{figs: preeigdis} (b). 
\end{enumerate}
 \begin{figure}[tbp]
\begin{center}
  \subfigure[Spectrum of $A$]{ \includegraphics[scale=.3]{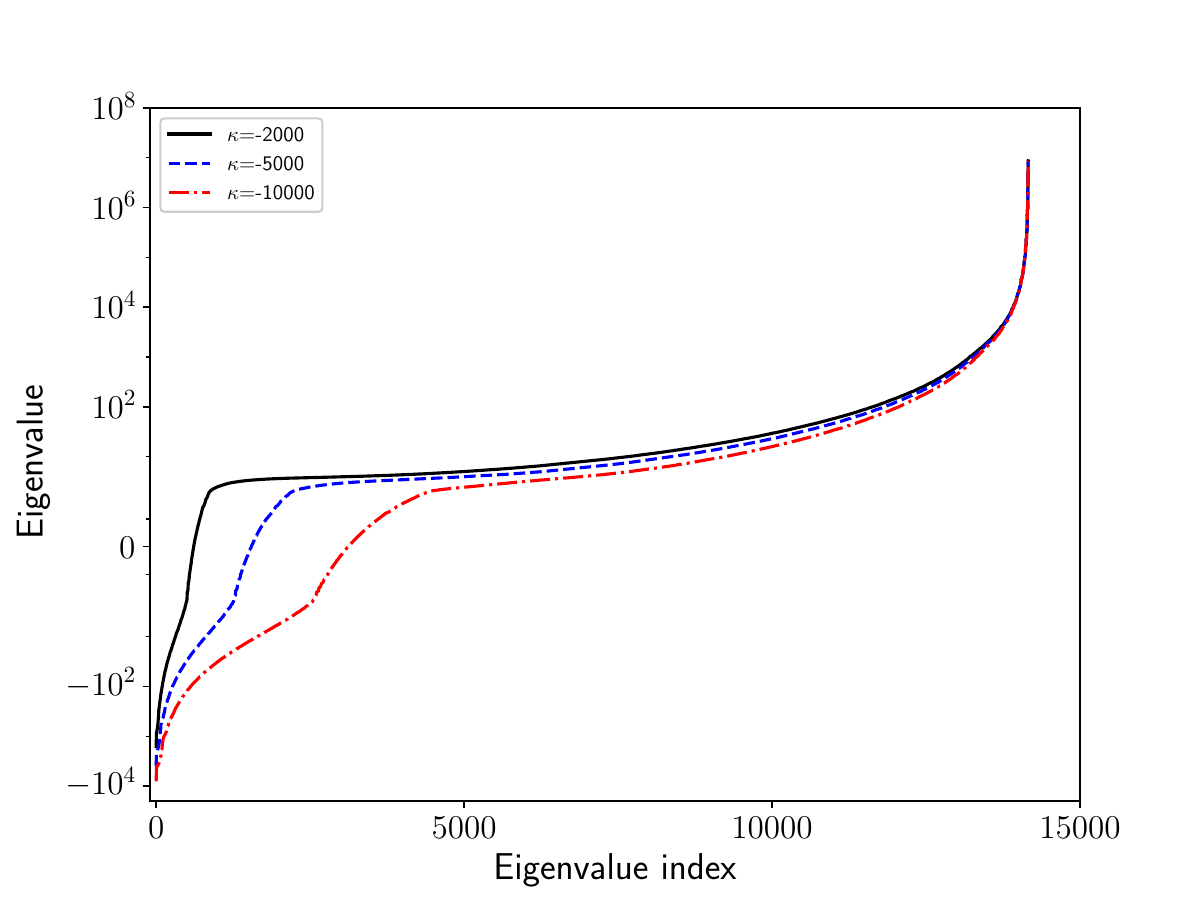}}\hspace{-8pt}
  \subfigure[Spectrum of $\tilde A^{-1} A$]{ \includegraphics[scale=.39]{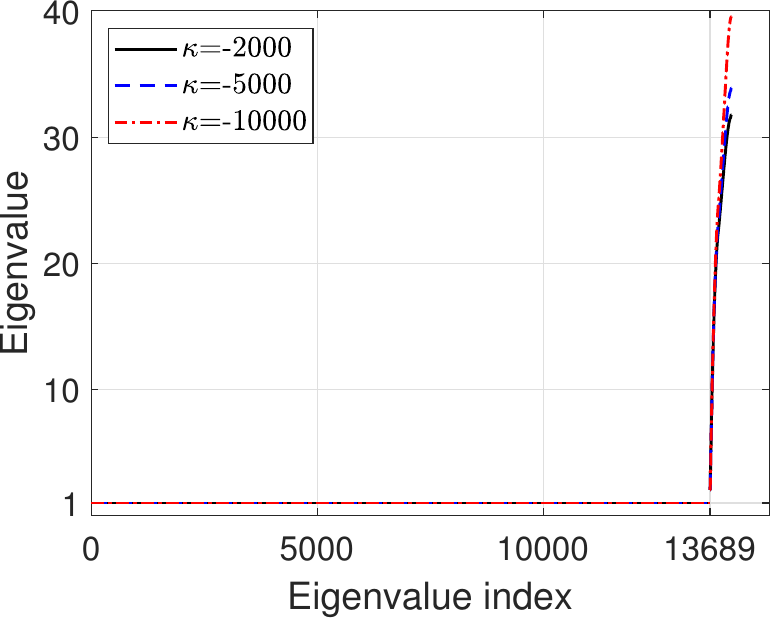}}
     \caption{\small Eigenvalue distributions of A and $\tilde A^{-1}A$ in 2D. (a) Spectrum of $A$; (b) spectrum of $\tilde A^{-1}A$; (c) the differences in $l^{\infty}$-norm between the eigenvalues of $\tilde A^{-1}A$ and $1$. These results are computed for various $\kappa=-2000,-5000,-10000$ and a fixed polynomial order $N=120$.} 
   \label{figs: preeigdis}
\end{center}
\end{figure}

\begin{thm}\label{thm:2dinvarspace}
The dimension of the invariant subspace of the $(N-1)^2$-by-$(N-1)^2$ matrix $\tilde A^{-1} A$  with respect to the eigenvalue $1$ is $(N-3)^2$.
\end{thm}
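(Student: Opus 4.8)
The plan is to identify the eigenvalue-$1$ eigenspace of $\tilde A^{-1}A$ with $\ker(A-\tilde A)$, rewrite it as a matrix equation, and determine its dimension using the structure of $MS$ established in Lemma~\ref{prop: MS}. Comparing the left-hand sides of \eqref{eq: meq2d} and \eqref{eq: meq2daux} shows that $A-\tilde A$ acts on $\mathrm{vec}(U)$ as $U\mapsto RUM+MUR$, where $R:=S-M^{-1}$ is a symmetric matrix; since $\tilde A$ is nonsingular (its eigenvalues are given in Proposition~\ref{prop: aux2d}), $\tilde A^{-1}A\,v=v$ is equivalent to $(A-\tilde A)v=0$. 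Hence it suffices to prove $\dim\{U\in\mathbb{R}^{(N-1)\times(N-1)}:RUM+MUR=0\}=(N-3)^2$.

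First I would record two consequences of Lemma~\ref{prop: MS}. The matrix $MS$ is lower triangular with diagonal entries equal to $1$ in positions $1,\dots,N-3$ and to the strictly larger numbers $1+\tfrac{(N-2)(N-1)}{2(2N-1)}$ and $1+\tfrac{(N-1)N}{2(2N+1)}$ in positions $N-2$ and $N-1$; its eigenvalues are therefore these diagonal entries, with smallest value $\ge 1$. Since $M^{1/2}SM^{1/2}$ is similar to $MS$, this gives $M^{1/2}SM^{1/2}\succeq I$, i.e.\ $S\succeq M^{-1}$ and hence $R\succeq 0$. Moreover, the two nonzero rows of $MS-I$ are, by the explicit formula in Lemma~\ref{prop: MS}, supported on index sets of opposite parity, hence linearly independent, so $\mathrm{rank}(MS-I)=2$ and therefore $\mathrm{rank}(R)=\mathrm{rank}\big(M^{-1}(MS-I)\big)=2$. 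In summary, $R$ is positive semidefinite of rank $2$.

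Next, write $R=MC$ with $C:=M^{-1}R$, so that $RUM+MUR=0$ becomes $MCUM+MUMC=0$; left-multiplying by $M^{-1}$ and setting $Y:=UM$ (a linear bijection on matrices) turns it into $CY+YC=0$. Since $R$ is symmetric and $M$ is SPD, $C$ is self-adjoint for the inner product $\langle x,y\rangle_M:=x^{\intercal}My$, hence diagonalizable with real eigenvalues; moreover $\langle Cx,x\rangle_M=x^{\intercal}Rx\ge 0$, so every eigenvalue of $C$ is $\ge 0$, while $\mathrm{rank}(C)=\mathrm{rank}(R)=2$. Thus $C$ has eigenvalue $0$ with multiplicity $N-3$ and two positive eigenvalues $\lambda_a,\lambda_b$. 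Diagonalizing $C=Q\Lambda Q^{-1}$ and putting $Z:=Q^{-1}YQ$, the equation $CY+YC=0$ reads $(\lambda_i+\lambda_j)Z_{ij}=0$ for all $i,j$; as all $\lambda_i\ge 0$, this forces $Z_{ij}=0$ unless $\lambda_i=\lambda_j=0$, so $Z$ is free exactly on the $(N-3)\times(N-3)$ block indexed by the zero eigenvalue of $C$. Hence $\dim\{Y:CY+YC=0\}=(N-3)^2$, and undoing the substitutions gives $\dim\ker(A-\tilde A)=(N-3)^2$.

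The crux is the second paragraph: one must extract from Lemma~\ref{prop: MS} both the sharp inequality $\lambda_{\min}(MS)\ge 1$ (equivalently $R\succeq 0$) and the exact value $\mathrm{rank}(R)=2$. Positive semidefiniteness is what makes the count exact: with rank $2$ alone, $C$ could in principle have nonzero eigenvalues $\pm\lambda$, which would raise the dimension to $(N-3)^2+2$; it is precisely the triangular shape together with the explicit positive diagonal corrections in Lemma~\ref{prop: MS} that rules this out. Finally, if "invariant subspace with respect to the eigenvalue $1$" is understood as the full generalized eigenspace, one further checks that $\tilde A^{-1}A$ has no Jordan block at $1$; this holds because $\tilde A^{-1}A$ is self-adjoint for the nondegenerate $\tilde A$-bilinear form and the remaining $4(N-2)$ eigenvalues stay bounded away from $1$ (cf.\ Figure~\ref{figs: preeigdis}), so that the algebraic and geometric multiplicities of $1$ coincide.
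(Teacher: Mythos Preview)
Your proof is correct and, in fact, sharper than the paper's. The paper's argument introduces the substitution $U=M\bar V M$, rewrites the eigenvalue-$1$ condition as $(SM-I)\bar V M^2+M^2\bar V(MS-I)=0$, and then simply \emph{exhibits} $(N-3)^2$ linearly independent solutions by taking $\bar V$ supported on the leading $(N-3)\times(N-3)$ block (using that the first $N-3$ columns of $SM-I$ vanish). This establishes only the lower bound $\dim\ker(A-\tilde A)\ge (N-3)^2$; the matching upper bound is not addressed.

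Your route is different and buys the exact count. You extract two further pieces of information from Lemma~\ref{prop: MS} that the paper does not use: the triangular structure of $MS$ together with its diagonal entries gives $S\succeq M^{-1}$, and the parity pattern of the last two rows of $MS-I$ gives $\mathrm{rank}(S-M^{-1})=2$. Recasting the equation as a Sylvester equation $CY+YC=0$ with $C=M^{-1}(S-M^{-1})$ diagonalizable, nonnegative, and of rank~$2$ then pins the kernel dimension at exactly $(N-3)^2$. The paper's construction is shorter and entirely elementary but leaves the equality unproved; your spectral argument is slightly more involved but closes that gap.

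One minor caveat: your final paragraph on Jordan blocks is not fully rigorous, since self-adjointness with respect to a possibly indefinite $\tilde A$-form does not by itself preclude nontrivial Jordan structure, and appealing to Figure~\ref{figs: preeigdis} is numerical rather than analytic. If ``invariant subspace with respect to the eigenvalue $1$'' is read as the ordinary eigenspace (which is the paper's evident intent), your main argument already suffices and this coda can be omitted.
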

\begin{proof}
It suffices to find $(N-3)^2$ linearly independent solutions ${\rm vec}(U)$ that satisfy
\begin{equation*}
A\, {\rm vec} (U)=\tilde A\, {\rm vec} (U),
\end{equation*}
or more specificically, $(N-3)^2$ linearly independent  $U$ that satisfy
\begin{equation*}
SUM+ MUS=M^{-1}UM + MUM^{-1}. 
\end{equation*}
Let us introduce an auxiliary matrix $\bar V$ such that $U=M\bar VM$, the above equation can be written equivalently as
\begin{equation}\label{thm: prove3}
(SM-I)\bar VM^2+M^2 \bar V (MS-I)=0.
\end{equation}
With the help of the analytic expressions of $MS$ and $SM$ in Lemma \ref{prop: MS}, one can obtain that all the elements in the first $(N-3)$ columns of $SM-I$ are zeros. Thus, by choosing $\bar V$ be a $(N-1)$-by-$(N-1)$ matrix defined by
\begin{equation*}
\bar V=
\begin{cases}
\bar v_{ij},& i,j \leq N-3,\\
0, & {\rm otherwise}, 
\end{cases}
\end{equation*}
where $\{ \bar v_{ij} \}_{i,j=1}^{N-3}$ are $(N-3)^2$ degree of freedom, one can readily verify that $\bar V$ satisfies equation \eqref{thm: prove3}. This ends the proof. 
\end{proof}

These observations and Theorem \ref{thm:2dinvarspace} inspire us to combine the auxiliary problem in Proposition \ref{prop: aux2d} with a preconditioned Krylov subspace iterative method. In what follows, we present a highly efficient solution algorithm, based on a proper combination of the fast diagonalisation solver for the auxiliary problem, the matrix-free operator evaluation approach without the assembly of global matrix, and a preconditioned GMRES method. The algorithm is summarised  in Algorithm \ref{ag:ag1}. 

\begin{algorithm}
\caption{Fast divergence-free spectral algorithm for equation \eqref{eq:system}}
\label{ag:ag1}
\centering
\begin{algorithmic}[1] 
\REQUIRE {The polynomial order $N$, the matrices $M$ and $S$ in equation \eqref{eq: M}-\eqref{eq: S}, the matrices $E$ and $D$ in \eqref{eq: Mdiag}, the basis $ \big \{\bs\Phi_{m,n} \big \}_{m,n=1}^{N-1}$ in equation \eqref{eq: div0b2d}, the positive integer $J_{\rm max}$ and the threshold $\varepsilon$}
\vspace{1pt}
\ENSURE{The divergence-free expansion coefficients $U$ of the numerical solution $\bs u_N$}
\vspace{1pt}
\STATE {Compute $F=(f_{mn})_{m,n=1,\cdots, N-1},$ with $f_{mn}=\big(\bs f,\bs \Phi_{mn}\big)$}
\vspace{1pt}
\STATE Evaluate $U_0=(u^0_{pq})_{p,q=1,\cdots, N-1}$ with $U_0=E^{\intercal} F E$
\vspace{1pt}
\STATE Assign $u^0_{pq}\leftarrow u^0_{pq}/\big\{ (d_p/d_q+d_p/d_q+2)+\kappa (d_p+d_q)  \big\} $ 
\vspace{1pt}
\STATE Evaluate $R=(r_{mn})_{m,n=1,\cdots, N-1}:=F-\big(SU_0M+2U_0+ MU_0S+\kappa( MU_0+U_0 M )\big)$

\STATE Assign $R \leftarrow E\tilde R E^{\intercal}$, $\bs r \leftarrow {\rm vec}(R)$ and $j\leftarrow 0$, compute $\delta=\| \bs r \|_2$, $\rho=\delta$,  $\gamma=\|{\rm vec}(F) \|_2$ and $\bs v^{(1)}=\bs r/\delta$
 \WHILE{  $\rho/\gamma>\varepsilon$, and  $j\leq J_{\rm max}$}
\STATE Assign $j\leftarrow j+1$
\STATE Reshape $\bs v^{(j)}$ to a matrix $V$ using natural column indexing and compute 
\vspace{-6pt}
$$W=(w_{mn})_{m,n=1,\cdots, N-1}=SV M+2 V + MVS+\kappa ( MV+V M  )$$
\vspace{-14pt}
\STATE Evaluate $\tilde W=(\tilde w_{pq})_{p,q=1,\cdots, N-1}$ with $\tilde W=E^{\intercal} W E$
\vspace{1pt}
\STATE Assign $\tilde w_{pq}\leftarrow \tilde w_{pq}/\big\{ (d_p/d_q+d_q/d_p+2)+\kappa (d_p+d_q)  \big\} $
\vspace{1pt}
\STATE Assign $W=E\tilde W E^{\intercal}$ and reshape the matrix $W$ to a vector $ \bs w$ using natural column indexing
\FOR{$i=1,\cdots,  j$}
\STATE Evaluate $h_{i,j}= (\bs w, \bs v^{(i)})$, assign $\bs w\leftarrow \bs w-h_{i,j}\bs v^{(i)}$
\ENDFOR
\STATE Evaluate $h_{j+1,j}=\|\bs w \|_2$ and $\bs v^{(j+1)}=\bs w /h_{j+1,j}$
\STATE Define  $H_j=\{ h_{p,q} \}_{1\leq p \leq j+1; 1\leq q\leq j}$ and $\bs e_1=(1,0,\cdots,0)^{\intercal}\in \mathbb{R}^{j+1}$
\STATE Evaluate ${ \bs y^{(j)}={\rm argmin}_{\bs y}  \|\delta \bs e_1-H_j \bs y \|_2 }$, assign $ \rho= \|\delta \bs e_1-H_j \bs y^{(j)} \|_2 $
\ENDWHILE
\STATE Define $V_j:=[\bs v^{(1)}, \cdots, \bs v^{(j)} ]$, evaluate $\bs x=  V_j \bs y^{(j)}$
\STATE Reshape $\bs x$ to matrix $X$ and compute $U=U_0+X$
\end{algorithmic}
\end{algorithm}

We expect that $\tilde A^{-1}$ be not only an effective preconditioner for $A$, but also the number of iterations will decrease, as the polynomial order $N$ increases, since the portion of the invariant subspace of $\tilde A^{-1} A$ w.r.t. the eigenvalue 1 increases as $({N-3})^2/({N-1})^2$.

Moreover, the proposed method can be readily extended  to solve the curl-curl problem with variable coefficients 
\begin{equation}\label{eq:systemvar2d}
\nabla\times \big(\alpha(\bs x)\nabla\times (\beta(\bs x) {\bs u}(\bs x) ) \big )+\kappa \, {\bs u}(\bs x)={\bs f}(\bs x), \;\;\; \; \bs x\in \Lambda^2,
\end{equation}
where $\alpha,\beta$ are functions depending on $\bs x$  and $\nabla \cdot \bs f=0$. This can be accomplished with minor modifications in Algorithm \ref{ag:ag1} as follows:
\begin{enumerate}
\item Replace step 4 in Algorithm \ref{ag:ag1} by evaluating
\begin{equation}\label{eq: mfreeRvar}\begin{split}
R&=(r_{mn})_{m,n=1,\cdots, N-1}\\&:=\big(\bs f,\bs \Phi_{mn}\big)-\big( \alpha(\bs x) \nabla\times (\beta(\bs x) \bs u_0),\nabla \bs \Phi_{mn}\big)+\kappa (\bs u_0,\bs \Phi_{mn}),
\end{split}\end{equation}
and using Gaussian quadrature, where 
$
{\bs u}_0(\bs x)=\sum_{m,n=1}^{N-1} u^0_{mn} \bs \Phi_{mn}(\bs x).
$
\item Replace step 8 in Algorithm \ref{ag:ag1} by reshaping $\bs v^{(j)}$ to a matrix $V$ using natural column indexing and computing 
\begin{equation}\label{eq: Wvar}
W=(w_{mn})_{m,n=1,\cdots, N-1}=\big( \alpha(\bs x) \nabla\times (\beta(\bs x) \bs v) ,\nabla \bs \Phi_{mn}\big)+\kappa (\bs v,\bs \Phi_{mn} ), 
\end{equation}
where  $\bs v(\bs x)=\sum_{m,n=1}^{N-1} (V)_{mn} \bs \Phi_{mn}(\bs x)$.
\end{enumerate}
\begin{rem}{\em 
Note that one can split the penta-diagonal matrix $M$ into two symmetric tridiagonal sub-matrices and compute its associated eigenvalues and eigenvectors in $O(N^2)$ operations (see, e.g. a detailed description of the solution procedure in \cite{zhang2017optimal}). The overall computational cost is dominated by the  matrix-matrix multiplication in step 2 and 9 in Algorithm \ref{ag:ag1}, which is only a small multiple of $N^3$ operations, as opposed to $O(N^4)$ operations for assembled global system.}
\end{rem}

\section{A fast algorithm for the three-dimensional curl-curl equation}
The proposed divergence-free spectral method and its fast solution algorithm in two dimensions could be straightforwardly extended to the solution of three-dimensional curl-curl problem with both constant and variable coefficients. Let us propose the divergence-free approximation space first.

\begin{prop}
$\mathbb{ X}_{N,3d}^{{\rm div}0}(\Lambda^3)$ is a divergence-free approximation space for $\mathbb{ X}^{{\rm div}0}(\Lambda^3)$ taking the form
\begin{equation*}
\mathbb{ X}_{N,3d}^{{\rm div}0}= {\rm span}\Big( \Big \{  \bs\Phi_{m,n,l}^1,\bs\Phi_{m,n,l}^2  \Big\}_{m,n,l=1}^{N-1} ,\, \Big\{\bs\Phi^{x}_{n,l}\Big\}_{n,l=1}^{N-1},\, \Big\{\bs\Phi^{y}_{m,l} \Big\}_{m,l=1}^{N-1},\, \Big\{\bs\Phi^{z}_{m,n} \Big\}_{m,n=1}^{N-1} \Big).
\end{equation*}
Here, $\{ \bs \Phi_{m,n,l}^1(\bs x),\, \bs \Phi_{m,n,l}^2(\bs x) \}$ stands for the interior modal basis functions defined by
\begin{equation}\label{eq:divBb1}
\begin{aligned}
&\bs \Phi_{m,n,l}^1(\bs x)=\Big( \psi_{m+1}(x_1)\phi_{n}(x_2)\phi_{l}(x_3), \;\; -\phi_m(x_1)\psi_{n+1}(x_2)\phi_{l}(x_3),\;\;0\Big)^{\intercal},\\
&\bs \Phi_{m,n,l}^2(\bs x)=\Big( \psi_{m+1}(x_1)\phi_{n}(x_2)\phi_{l}(x_3),\;\; 0,\;\;   -\phi_{m}(x_1)\phi_{n}(x_2)\psi_{l+1}(x_3)\Big)^{\intercal},
\end{aligned}
\end{equation}
and $\big\{ \bs \Phi^{x}_{n,l},\, \bs \Phi^{y}_{m,l}, \, \bs \Phi^{z}_{m,n}  \big\}$ stands for the face modal basis functions defined by
\begin{align}
&  \bs \Phi^{x}_{n,l}(\bs x)=\big( 0,\; \; \psi_{n+1}(x_2) \phi_l(x_3),\;\; -\phi_n(x_2) \psi_{l+1}(x_3)  \big)^{\intercal},  \label{eq:hdive1}\\
&  \bs \Phi^{y}_{m,l}(\bs x)=\big(  \psi_{m+1}(x_1) \phi_l(x_3),\;\;0,\;\; -\phi_m(x_1) \psi_{l+1}(x_3)  \big)^{\intercal}, \label{eq:hdive2}\\
&  \bs \Phi^{z}_{m,n}(\bs x)=\big(  \psi_{m+1}(x_1) \phi_n(x_2),\;\; -\phi_m(x_1) \psi_{n+1}(x_2) ,\;\;0 \big)^{\intercal},\label{eq:hdive3}
\end{align}
where $\{\psi_{m+1} \}_{m=1}^{N-1}$ and $\{\phi_{m} \}_{m=1}^{N-1}$ are defined in equation \eqref{eq: psiphi}.
\end{prop}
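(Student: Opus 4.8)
The plan is to follow the two-dimensional Proposition essentially verbatim: show that each of the listed vector fields belongs to $\mathbb{X}^{{\rm div}0}(\Lambda^3)$ --- that is, it is a polynomial, it is pointwise divergence-free, and its normal trace vanishes on $\partial\Lambda^3$ --- and then note that the six families are linearly independent, so that the displayed span is a genuine divergence-free approximation space.

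The divergence-free verification is a one-line cancellation for each family, using only the derivative relation $\psi_{m+1}'=\phi_m$ from \eqref{eq:psirelaphi}. For the interior modes \eqref{eq:divBb1}, differentiating the first two components of $\bs\Phi_{m,n,l}^1$ gives $\psi_{m+1}'(x_1)\phi_n(x_2)\phi_l(x_3)-\phi_m(x_1)\psi_{n+1}'(x_2)\phi_l(x_3)=\phi_m\phi_n\phi_l-\phi_m\phi_n\phi_l=0$; the identical computation in $x_1$ and $x_3$ gives $\nabla\cdot\bs\Phi_{m,n,l}^2=0$, and each of the face families $\bs\Phi^{x}_{n,l}$, $\bs\Phi^{y}_{m,l}$, $\bs\Phi^{z}_{m,n}$ in \eqref{eq:hdive1}--\eqref{eq:hdive3} produces an analogous two-term cancellation in its two active variables. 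For the boundary condition $\bs n\cdot\bs u=0$ on $\partial\Lambda^3$ I would use that $P_{m+1}^{(-1,-1)}$ carries the factor $(\xi^2-1)$ in \eqref{eq: K1}, so $\psi_{m+1}(\pm1)=0$; on each face $x_j=\pm1$ the $j$-th component of every basis function is either identically zero or contains a factor $\psi(\pm1)$, hence vanishes. (The tangential condition $\bs n\times(\nabla\times\bs u)=0$ is a natural boundary condition of the weak form \eqref{eq: weakform} and need not be imposed on the basis.) This already shows that the span is contained in $\mathbb{X}^{{\rm div}0}(\Lambda^3)$.

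The step I expect to be the main obstacle --- the only one not obtained by a mechanical transcription of the planar argument --- is to confirm that the interior and face families are linearly independent and jointly exhaust the divergence-free, zero-normal-trace fields of the underlying tensor-product polynomial space, so that the name ``approximation space'' is warranted and the dimension count ($2(N-1)^3$ interior modes plus $3(N-1)^2$ face modes) is sharp. The argument I would use is a boundary--interior decomposition of the vector potential: $\bs\Phi_{m,n,l}^1$ is the curl of $\big(0,0,\psi_{m+1}(x_1)\psi_{n+1}(x_2)\phi_l(x_3)\big)$ and $\bs\Phi_{m,n,l}^2$ is, up to a sign, the curl of $\big(0,\psi_{m+1}(x_1)\phi_n(x_2)\psi_{l+1}(x_3),0\big)$, so the interior modes occupy precisely the tensor slots in which the transverse Legendre factor has degree $\ge1$; the face modes $\bs\Phi^{x}$, $\bs\Phi^{y}$, $\bs\Phi^{z}$, each a two-dimensional divergence-free mode trivially extended in the remaining variable, fill in the complementary slots in which one coordinate carries no dependence. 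Since these slots are pairwise disjoint and within each slot the constraint $\psi'=\phi$ fixes the mode up to exactly the stated degrees of freedom, linear independence and completeness follow, and everything else reduces to the computation already performed in two dimensions.
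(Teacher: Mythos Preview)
Your proposal is correct and its core step---verifying $\nabla\cdot\bs\chi=0$ for each of the five basis families by the cancellation coming from $\psi_{m+1}'=\phi_m$---is exactly the paper's proof, which consists of that single line and nothing else. The paper reads ``divergence-free approximation space'' in the weak sense of ``finite-dimensional subspace of $\mathbb{X}^{{\rm div}0}$'' and does not check the normal-trace condition, linear independence, or completeness; your additional paragraphs on $\psi_{m+1}(\pm1)=0$ and on the vector-potential slot decomposition are more than the paper supplies, so you can safely drop them or keep them as remarks.
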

\begin{proof}
By the identity \eqref{eq:psirelaphi}, one confirms that
\begin{equation}\label{eq:div03did}
\nabla \cdot \bs \chi(\bs x)=0,\quad  \bs \chi=\bs \Phi_{m,n,l}^1, \;\bs \Phi_{m,n,l}^2,\;  \bs \Phi^{x}_{n,l},\;  \bs \Phi^{y}_{m,l},\; \bs \Phi^{z}_{m,n}.
\end{equation}
This ends the proof.
\end{proof}

Correspondingly, the approximation scheme of the three-dimensional curl-curl problem with constant coefficients in equation \eqref{eq: weakform} takes the form: find $\bs u_N \in \mathbb{ X}_{N,3d}^{{\rm div}0}$ such that
\begin{equation}\label{eq: weakform3d}
\mathcal{A}(\bs u_N,\bs v):=(\nabla\times {\bs u}_N,\nabla\times {\bs v})+\kappa ({\bs u}_N,\bs v)=({\bs f},\bs v),\quad \forall \bs v\in \mathbb{ X}_{N,3d}^{{\rm div}0}.
\end{equation}
Let us expand the numerical solution $\bs u_N$ by
\begin{equation*}\label{eq: uexp3d}\begin{split}
\bs u_N(\bs x)=& \sum_{m,n,l=1}^{N-1} \big\{  u_{mnl}^1\bs\Phi_{m,n,l}^1+u_{mnl}^2\bs\Phi_{m,n,l}^2 \big\} \\&+\sum_{n,l=1}^{N-1} u^{x}_{nl} \bs \Phi^{x}_{n,l} + \sum_{m,l=1}^{N-1} u^{y}_{ml} \bs \Phi^{y}_{m,l}  +\sum_{m,n=1}^{N-1} u^{z}_{mn} \bs \Phi^{z}_{m,n} ,
\end{split}\end{equation*}
where $f^i_{\hat m \hat n \hat l}=\big(\bs f,\bs \Phi^i_{\hat m, \hat n, \hat l}\big),\;\; i=1,2$,
and 
\begin{equation*}\label{eq: fexp3d}
\begin{aligned}
&f^{x}_{\hat n \hat l}=\big(\bs f,\bs \Phi^{x}_{\hat n,\hat l}\big),\;\; f^{y}_{\hat m \hat l}=\big(\bs f,\bs \Phi^{y}_{\hat m,\hat l}\big),\;\; f^{z}_{\hat m \hat n}=\big(\bs f,\bs \Phi^{z}_{\hat m,\hat n}\big).
\end{aligned}
\end{equation*}
Remapping the multidimensional variables $\{u^1_{mnl}, u^2_{mnl}, u^x_{nl},u^y_{ml}, u^z_{mn}  \}$ and the source terms $\{f^1_{mnl},$ $f^2_{mnl},$ $f^x_{nl},$ $f^y_{ml}, f^z_{mn}\}$ into a vector form following a natural index ordering as follows:
\begin{equation*}
\begin{aligned}
&\big\{ \big({\rm vec}(u^i)\big)_{p_1}, ({\rm vec}(f^i))_{p_1}  \big\}_{p_1=1,\cdots,(N-1)^3}=\{ u^i_{mnl}, f^i_{mnl} \}_{m,n,l=1}^{N-1}, \;\;i=1,2,\\[5pt]
&\big\{ \big({\rm vec}(u^x)\big)_p, \big({\rm vec}(f^x)\big)_p  \big\}_{p=1,\cdots,(N-1)^2}=\{ u^x_{nl}, f^x_{nl} \}_{n,l=1}^{N-1}, \;\; p=n+(N-1)(l-1),\\[5pt]
&\big\{ \big({\rm vec}(u^y)\big)_p, \big({\rm vec}(f^y)\big)_p  \big\}_{p=1,\cdots,(N-1)^2}=\{ u^y_{ml}, f^y_{ml} \}_{m,l=1}^{N-1}, \;\; p=m+(N-1)(l-1),\\[5pt]
&\big\{ \big({\rm vec}(u^z)\big)_p, \big({\rm vec}(f^z)\big)_p \big\}_{p=1,\cdots,(N-1)^2}=\{ u^z_{mn}, f^z_{mn} \}_{m,n=1}^{N-1}, \;\; p=m+(N-1)(n-1),
\end{aligned}
\end{equation*}
where $p_1=m+(N-1)(n-1)+(N-1)^2(l-1)$ for the first identity in the above.
Let us take $\bs v(\bs x)=\Big\{ \bs\Phi_{\hat m,\hat n,\hat l}^1,\; \bs\Phi_{\hat m,\hat n,\hat l}^2,$ $ \bs \Phi^{x}_{\hat n,\hat l},\; \bs \Phi^{y}_{\hat m,\hat l},\; \bs \Phi^{z}_{\hat m,\hat n}   \Big \}$ in equation \eqref{eq: weakform3d} for $\hat m,\hat n,\hat l=1,\cdots, N-1$, one obtains the following system of linear equations:
\begin{equation}\label{eq:mateq3d}
\begin{aligned}
&A^{11}\, {\rm vec}(u^1)+A^{12}\, {\rm vec}(u^2)={\rm vec}(f^1),\;\;\;\;A^{21} \,{\rm vec}(u^1)+A^{22}\, {\rm vec}(u^2)={\rm vec}(f^2),\\
& A^{xx}\, {\rm vec}(u^x)={\rm vec}(f^x),\;\;\;\;A^{yy}\, {\rm vec}(u^y)={\rm vec}(f^y),\;\;\;\;A^{zz}\,{\rm vec}(u^z)={\rm vec}(f^z),
\end{aligned}
\end{equation}
where
\begin{equation}\label{eq:3dgovmat}
\begin{split}
A^{11}:=&S\otimes M \otimes I+ S\otimes I \otimes M+I \otimes M \otimes S+I \otimes S \otimes M \\
&+2I \otimes I \otimes I+\kappa \big(I \otimes I \otimes M+I \otimes M \otimes I  \big),\\
A^{22}:=&M\otimes S \otimes I+ S\otimes I \otimes M+M \otimes I \otimes S+I \otimes S \otimes M \\
&+2I \otimes I \otimes I+\kappa \big(I \otimes I \otimes M+M \otimes I \otimes I \big),\\
A^{12}=&A^{21}:=S \otimes I \otimes M+I \otimes S \otimes M+I \otimes I \otimes I+\kappa  I \otimes I \otimes M,\\
A^{xx}=&A^{yy}=A^{zz}:=2\Big( M \otimes S+S\otimes M+2I \otimes I +\kappa \big( I \otimes M+M \otimes I \big) \Big).
\end{split}
\end{equation}
%
%
Similar with the two-dimensional case, we construct an auxiliary equation by replacing $S$ by $M^{-1}$ in equation \eqref{eq:3dgovmat} and show that the system of matrix equation  is also fully diagonalizable and the solution can be obtained via analytic expressions. 
\begin{prop}\label{prop: solu3d}
Define an auxiliary system of matrix equations of the original system \eqref{eq:mateq3d}
\begin{equation}\label{eq:mateq3dap}
\begin{aligned}
&\tilde A^{11}\, {\rm vec}(u^1)+\tilde A^{12}\, {\rm vec}(u^2)={\rm vec}(f^1),\;\;\;\tilde A^{21}\, {\rm vec}(u^1)+ \tilde A^{22}\, {\rm vec}(u^2)={\rm vec}(f^2),\\
& \tilde A^{xx}\, {\rm vec}(u^x)={\rm vec}(f^x),\;\;\; \tilde A^{yy}\,{\rm vec}(u^y)={\rm vec}(f^y),\;\;\; \tilde A^{zz}\,{\rm vec}(u^z)={\rm vec}(f^z),
\end{aligned}
\end{equation}
where $\tilde A^{xx}= \tilde A^{yy}= \tilde A^{zz}$ and 
\begin{equation}
\begin{split}
\tilde A^{xx}=&:=2\Big( M \otimes M^{-1}+M^{-1}\otimes M+2I \otimes I +\kappa \big( I \otimes M+M \otimes I \big) \Big),\\
\tilde A^{11}:=&M^{-1}\otimes M \otimes I+ M^{-1}\otimes I \otimes M+I \otimes M \otimes M^{-1}+I \otimes M^{-1} \otimes M \\
&+2I \otimes I \otimes I+\kappa \big(I \otimes I \otimes M+I \otimes M \otimes I  \big),\\
\tilde A^{22}:=&M\otimes M^{-1} \otimes I+ M^{-1}\otimes I \otimes M+M \otimes I \otimes M^{-1}+I \otimes M^{-1} \otimes M \\
&+2I \otimes I \otimes I+\kappa \big(I \otimes I \otimes M+M \otimes I \otimes I \big),\\
\tilde A^{12}=&\tilde A^{21}:=M^{-1} \otimes I \otimes M+I \otimes M^{-1} \otimes M+I \otimes I \otimes I+\kappa I \otimes I \otimes M.
\end{split}
\end{equation}
Here, $M$ is the penta-diagonal matrix defined in equation \eqref{eq: M} and can be diagonalized by \eqref{eq: Mdiag}. 
Let us denote the auxiliary variables $\{ v^1_{ijk}, v^2_{ijk}, v_{jk}^x, v_{ik}^y, v_{ij}^z \}_{i,j,k=1}^{N-1}$ and define for $m,n,l=1,\cdots, N-1$,
\begin{equation}\label{eq: f3d}
\begin{aligned}
& \tilde f^1_{mnl}=\sum_{i,j,k=1}^{N-1} E^{\intercal}_{mi} E^{\intercal}_{nj}E^{\intercal}_{lk} f^1_{ijk}, \quad \tilde f^2_{mnl}=\sum_{i,j,k=1}^{N-1}E^{\intercal}_{mi}E^{\intercal}_{nj}E^{\intercal}_{lk} f^2_{ijk},\\
& \tilde f^x_{nl}=\sum_{i,j=1}^{N-1} E_{ni}^{\intercal} E_{lj}^{\intercal} f^x_{ij},\quad \tilde f^y_{ml}=\sum_{i,j=1}^{N-1} E_{mi}^{\intercal} E_{lj}^{\intercal} f_{ij}^y,\quad \tilde f^z_{mn}=\sum_{i,j=1}^{N-1} E_{mi}^{\intercal} E_{nj}^{\intercal} f^z_{ij}.
\end{aligned}
\end{equation}
The solution $\{u_{mnl}^1,u_{mnl}^2, u_{nl}^x,u_{ml}^y, u_{mn}^z \}$ for $m,n,l=1,\cdots, N-1$  takes the form
\begin{equation}\label{eq: Usolved}
\begin{aligned}
&u^1_{mnl}=\sum_{i,j,k=1}^{N-1} E_{mi}E_{nj}E_{lk} v^1_{ijk},\quad u^2_{mnl}=\sum_{i,j,k=1}^{N-1} E_{mi}E_{nj}E_{lk} v^2_{ijk},\\
&u_{nl}^x=\sum_{j,k=1}^{N-1}E_{nj}E_{lk} v_{jk}^x,\quad u_{ml}^y=\sum_{i,k=1}^{N-1}E_{mi}E_{lk} v_{ik}^y,\quad u_{mn}^z=\sum_{i,j=1}^{N-1}E_{mi}E_{nj} v_{ij}^z,
\end{aligned}
\end{equation}
where
\begin{equation}\label{eq: Vsolved}
\begin{aligned}
 & v_{mnl}^{1}=\dfrac{ \gamma_{mnl}^{22} \tilde f_{mnl}^{1}-\gamma_{mnl}^{12}\tilde f_{mnl}^{2}}{\gamma_{mnl}^{22}\gamma_{mnl}^{11}-\gamma_{mnl}^{12}\gamma_{mnl}^{21}},\qquad    v_{mnl}^{2}=\dfrac{ -\gamma_{mnl}^{21} \tilde f_{mnl}^{1}+\gamma_{mnl}^{11}\tilde f_{mnl}^{2}}{\gamma_{mnl}^{22}\gamma_{mnl}^{11}-\gamma_{mnl}^{12}\gamma_{mnl}^{21}},\\
&     v_{nl}^{x}= \dfrac{\tilde f^{x}_{nl}}{ d_n/d_l+d_l/d_n+2+\kappa (d_n+d_l)},\\
&   v_{ml}^{y}= \dfrac{\tilde f^{y}_{ml}}{d_m/d_l+d_l/d_m+2+\kappa (d_m+d_l) },\\
&    v_{mn}^{z}= \dfrac{\tilde f^{z}_{mn}}{d_m/d_n+d_n/d_m+2+\kappa (d_m+d_n)  },
\end{aligned}
\end{equation}
with 
\begin{equation}\label{eq: cmnl}
\begin{aligned}
& \gamma_{mnl}^{11}= d_n/ d_l+d_m/ d_l +d_n/ d_m+ d_m/d_n +2 +\kappa (d_m+d_n),\\
& \gamma_{mnl}^{12}=d_m /d_l+d_m/ d_n +1 + \kappa d_m ,\;\;\; \gamma_{mnl}^{21}=d_m/ d_l+d_m/ d_n +1 + \kappa d_m,\\
& \gamma_{mnl}^{22}= d_l/d_n +d_m/ d_l + d_l/d_m +d_m/ d_n  +2+\kappa ( d_m +d_l) .
\end{aligned}
\end{equation}
\end{prop}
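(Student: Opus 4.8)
The plan is to diagonalize the auxiliary system \eqref{eq:mateq3dap} by the single orthogonal similarity transformation generated by the eigenvector matrix $E$ of $M$. The key observation is that once $S$ is replaced by $M^{-1}$, every Kronecker factor occurring in $\tilde A^{11},\tilde A^{12}=\tilde A^{21},\tilde A^{22}$ and in $\tilde A^{xx}=\tilde A^{yy}=\tilde A^{zz}$ is one of $M$, $M^{-1}$, $I$, and all three are simultaneously diagonalized by $E$ through \eqref{eq: Mdiag}, namely $M=EDE^{\intercal}$, $M^{-1}=ED^{-1}E^{\intercal}$, $I=EE^{\intercal}$; here $M$ is symmetric positive definite (being the Gram matrix of the linearly independent functions $\{\psi_{m+1}\}_{m=1}^{N-1}$), so $D={\rm diag}(d_1,\dots,d_{N-1})$ has positive entries and $E$ is orthogonal.

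First I would handle the three decoupled face blocks. Since $E\otimes E$ is orthogonal and $(E\otimes E)^{\intercal}(P\otimes Q)(E\otimes E)=(E^{\intercal}PE)\otimes(E^{\intercal}QE)$, conjugating $\tilde A^{xx}$ by $E\otimes E$ produces a diagonal matrix; under the index ordering $p=n+(N-1)(l-1)$ its $(n,l)$-entry is obtained by replacing each Kronecker factor in \eqref{eq:3dgovmat} (with $S\mapsto M^{-1}$) by the corresponding $d_\bullet$, $d_\bullet^{-1}$ or $1$ acting on the appropriate slot. Introducing the transformed data $\tilde f^x,\tilde f^y,\tilde f^z$ as in \eqref{eq: f3d} and the transformed unknowns $v^x,v^y,v^z$ as in \eqref{eq: Usolved}, the equation $\tilde A^{xx}{\rm vec}(u^x)={\rm vec}(f^x)$ becomes a scalar equation for each pair $(n,l)$, whose solution is precisely the formula for $v^x_{nl}$ in \eqref{eq: Vsolved}; the $y$- and $z$-blocks are identical up to relabelling, and the back-transformation \eqref{eq: Usolved} recovers $u^x,u^y,u^z$.

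Next I would treat the coupled $\{u^1,u^2\}$ subsystem. Using that $E\otimes E\otimes E$ is orthogonal, conjugating the block operator with blocks $\tilde A^{ab}$ ($a,b\in\{1,2\}$) by ${\rm diag}(E\otimes E\otimes E,\,E\otimes E\otimes E)$ turns each $\tilde A^{ab}$ into a diagonal matrix $\Gamma^{ab}={\rm diag}(\gamma^{ab}_{mnl})$, whose entries are read off from \eqref{eq:3dgovmat} (with $S\mapsto M^{-1}$) by substituting the eigenvalue $d_\bullet$, $d_\bullet^{-1}$ or $1$ for each factor acting on the corresponding entry of the triple $(m,n,l)$ under the ordering $p_1=m+(N-1)(n-1)+(N-1)^2(l-1)$; the bookkeeping reproduces the $\gamma^{ab}_{mnl}$ of \eqref{eq: cmnl}. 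After the change of variables \eqref{eq: f3d}--\eqref{eq: Usolved}, the coupled system decouples, for each $(m,n,l)$, into the $2\times2$ linear system
\begin{equation*}
\gamma^{11}_{mnl}v^1_{mnl}+\gamma^{12}_{mnl}v^2_{mnl}=\tilde f^1_{mnl},\qquad \gamma^{21}_{mnl}v^1_{mnl}+\gamma^{22}_{mnl}v^2_{mnl}=\tilde f^2_{mnl},
\end{equation*}
and Cramer's rule yields the expressions for $v^1_{mnl},v^2_{mnl}$ in \eqref{eq: Vsolved}, after which \eqref{eq: Usolved} gives $u^1,u^2$.

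Finally I would check well-posedness, i.e. that the scalar denominators in \eqref{eq: Vsolved} and the $2\times2$ determinants $\gamma^{11}_{mnl}\gamma^{22}_{mnl}-\gamma^{12}_{mnl}\gamma^{21}_{mnl}$ are nonzero. For the face blocks the denominators are sums of $d_n/d_l+d_l/d_n\ge 2$, the constant $2$, and $\kappa(d_n+d_l)>0$, hence strictly positive; for the coupled block one verifies $\gamma^{11}_{mnl}\gamma^{22}_{mnl}>(\gamma^{12}_{mnl})^{2}$ by a short elementary estimate in the positive variables $d_m,d_n,d_l$ (equivalently, by noting that the conjugated block is the diagonalisation of the symmetric positive definite operator built from $M^{-1}$ and $\kappa>0$ via the Kronecker-sum structure of \eqref{eq:3dgovmat}). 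The only place where care is needed — not a genuine obstacle — is the Kronecker-product bookkeeping: keeping straight which tensor slot each factor in \eqref{eq:3dgovmat} acts on and confirming that this matches \eqref{eq: cmnl} exactly.
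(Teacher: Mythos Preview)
Your proposal is correct and follows exactly the approach the paper indicates: simultaneous diagonalisation of $M$, $M^{-1}$, $I$ by the orthogonal matrix $E$ from \eqref{eq: Mdiag}, applied tensorwise to reduce the face blocks to scalar equations and the coupled interior block to a family of $2\times2$ systems solved by Cramer's rule---precisely the three-dimensional analogue of the argument for Proposition~\ref{prop: aux2d} that the paper invokes. Your additional well-posedness check (positivity of the denominators and of the $2\times2$ determinants under the standing assumption $\kappa>0$) goes slightly beyond what the paper states, but is a welcome clarification rather than a deviation.
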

\begin{proof}
The  proof is similar with that of Proposition \ref{prop: aux2d}. For the sake of conciseness, we skip the detail. 
\end{proof}
It can be observed that the face modes $\big \{ {\rm vec}(u^x),{\rm vec}(u^y), {\rm vec}(u^z)  \big \}$ are fully decoupled with the interior modes $\big\{ {\rm vec}(u^1),{\rm vec}(u^2) \big \}$ and the governing equations for the face modes are exactly the same with the two-dimensional case in equation \eqref{eq: meq2d}. Thus, we could focus on the solution of the interior modes.  With a slight abuse of notation, we denote $A$ and $\tilde A$, respectively, by
\begin{equation}\label{eq: AtildeA}
A=\begin{bmatrix}
A^{11} & A^{12}  \\
A^{21} & A^{22} 
\end{bmatrix},\quad
\tilde A=\begin{bmatrix}
 \tilde A^{11} &  \tilde A^{12}  \\
\tilde A^{21} & \tilde A^{22}  
\end{bmatrix}.
\end{equation}

Once again, we investigate theoretically the spectrum of $\tilde A^{-1}A$ and the following result holds.

\begin{thm}\label{thm:3dinvarspace}
The dimension of the invariant subspace of the $2(N-1)^3$-by-$2(N-1)^3$ matrix $\tilde A^{-1} A$  with respect to the eigenvalue $1$ is $2(N-3)^3$.
\end{thm}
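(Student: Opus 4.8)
The plan is to adapt the proof of Theorem~\ref{thm:2dinvarspace} to the $2\times 2$ block structure \eqref{eq: AtildeA}. Since $\tilde A$ is invertible, $\ker(\tilde A^{-1}A-I)=\ker(A-\tilde A)$, so it suffices to exhibit $2(N-3)^3$ linearly independent pairs $\big({\rm vec}(u^1),{\rm vec}(u^2)\big)$ on which $A$ and $\tilde A$ act identically, that is, solutions of
\begin{equation*}
A^{11}{\rm vec}(u^1)+A^{12}{\rm vec}(u^2)=\tilde A^{11}{\rm vec}(u^1)+\tilde A^{12}{\rm vec}(u^2)
\end{equation*}
together with the companion equation obtained by replacing the superscripts $11,12$ by $21,22$.

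First I would record the block differences using \eqref{eq:3dgovmat}. Since $\tilde A$ is obtained from $A$ by replacing every occurrence of the stiffness matrix $S$ with $M^{-1}$, and since each Kronecker summand of the blocks $A^{11},A^{12},A^{21},A^{22}$ contains \emph{at most one} factor equal to $S$ (the remaining summands, namely $2I\otimes I\otimes I$ and the $\kappa$-terms, carry no $S$ and cancel in the difference), every summand of $A^{\bullet\bullet}-\tilde A^{\bullet\bullet}$ contains \emph{exactly one} factor $S-M^{-1}$, sitting in one of the three tensor slots, with the other two slots occupied by $M$ or $I$.

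Next, mimicking the substitution that leads to \eqref{thm: prove3}, I would set ${\rm vec}(u^i)=(M\otimes M\otimes M)\,{\rm vec}(v^i)$, $i=1,2$ --- the same change of variables applied to both components, which is legitimate because $u^1$ and $u^2$ are coupled through the off-diagonal blocks. Using $(C_l\otimes C_n\otimes C_m)(M\otimes M\otimes M)=(C_lM)\otimes(C_nM)\otimes(C_mM)$, the lone factor $S-M^{-1}$ in each summand turns into $(S-M^{-1})M=SM-I$, while the $M$'s become $M^2$ and the $I$'s become $M$. By Lemma~\ref{prop: MS} the first $N-3$ columns of $SM-I$ vanish, so the action of $SM-I$ on a tensor index annihilates any three-tensor supported on $\{(i,j,k):1\le i,j,k\le N-3\}$, and multiplying the remaining two slots by $M$ or $M^2$ does not enlarge the support in the slot occupied by $SM-I$. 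Hence, if the tensors $\mathcal V^1,\mathcal V^2$ reshaped from $v^1,v^2$ are supported on $\{1,\dots,N-3\}^3$, every summand of both block equations vanishes, so $\big({\rm vec}(u^1),{\rm vec}(u^2)\big)\in\ker(A-\tilde A)$. Taking the $2(N-3)^3$ entries $\{v^1_{ijk},v^2_{ijk}\}_{i,j,k=1}^{N-3}$ as free parameters, and using that $M\otimes M\otimes M$ is invertible while the box-supported basis tensors are linearly independent, this produces $2(N-3)^3$ linearly independent elements of $\ker(A-\tilde A)$, which yields the lower bound.

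The step I expect to be hardest is the matching upper bound $\dim\ker(A-\tilde A)\le 2(N-3)^3$, needed to pin the dimension down exactly. After the change of variables above, this amounts to showing that the transformed block operator is injective on the complementary subspace of pairs $(\mathcal V^1,\mathcal V^2)$ having a nonzero entry with some index in $\{N-2,N-1\}$; I would attack this using the explicit, essentially rank-two form of the two nonzero columns of $SM-I$ supplied by Lemma~\ref{prop: MS} (they are supported on complementary parity classes) together with the penta-diagonal structure of $M$, while tracking the possible cancellations among the four block summands. This is only notationally heavier than, and structurally parallel to, the two-dimensional computation; the sharpness of the bound is moreover corroborated by the numerically observed spectrum of $\tilde A^{-1}A$ in the three-dimensional case.
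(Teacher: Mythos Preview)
Your construction of the $2(N-3)^3$-dimensional eigenspace is correct and essentially identical to the paper's own proof: the same change of variables ${\rm vec}(u^i)=(M\otimes M\otimes M)\,{\rm vec}(\bar v^i)$, the same reduction of every summand of $A-\tilde A$ to a single factor $SM-I$ (respectively $MS-I$) in one tensor slot via Lemma~\ref{prop: MS}, and the same choice of $\bar v^1,\bar v^2$ supported on $\{1,\dots,N-3\}^3$. You are also right that a matching upper bound is required to conclude the stated equality; the paper's proof, like that of Theorem~\ref{thm:2dinvarspace}, does not supply one and only exhibits the lower bound, so on this point your proposal is in fact more scrupulous than the paper itself.
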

\begin{proof}
The proof is similar with Theorem \ref{thm:2dinvarspace}. It suffices to find $2(N-3)^2$ linearly independent solutions $\{{\rm vec}(u_1),{\rm vec}(u_2)\}$ such that 
\begin{equation}\label{eq:proof3d1}
\begin{aligned}
&A^{11}\, {\rm vec}(u^1)+A^{12}\, {\rm vec}(u^2)=\tilde A^{11}\, {\rm vec}(u^1)+\tilde A^{12}\, {\rm vec}(u^2),\\
&A^{21} \,{\rm vec}(u^1)+A^{22}\, {\rm vec}(u^2)=\tilde A^{21} \,{\rm vec}(u^1)+\tilde A^{22}\, {\rm vec}(u^2).
\end{aligned}
\end{equation}
Let us introduce the auxiliary variables $\{ \bar v^1_{mnl}, \bar v^2_{mnl}  \}$ such that 
\begin{equation*}\label{eq:proof3d2}
u^1_{mnl}=\sum_{i,j,k=1}^{N-1} M_{mi}M_{nj}M_{lk} \bar v^1_{ijk},\quad u^2_{mnl}=\sum_{i,j,k=1}^{N-1} M_{mi}M_{nj}M_{lk} \bar v^2_{ijk}.
\end{equation*}
Inserting the above relation into equation \eqref{eq:proof3d1}, using the fact that the elements in the first $(N-3)$ columns of $SM-I$ and the first $(N-3)$ rows of $MS-I$ are zeros and choosing $\{ \bar v^1_{mnl}, \bar v^2_{mnl} \}$ such that
\begin{equation*}\label{eq:proof3d3}
 \bar v^i_{mnl}=
 \begin{cases}
x^i_{mnl}, \;\; & 1\leq m,n,l\leq N-3,\\
0, &{\rm otherwise},
 \end{cases},\quad i=1,2. 
\end{equation*}
where $\{ x^i_{mnl} \}^{i=1,2}_{1\leq m,n,l\leq N-3}$ are $2(N-3)^3$ free variables. 
One can readily show that $\{ u^1_{mnl}, u^2_{mnl} \}$ satisfies the linear system \eqref{eq:proof3d1}, which ends the proof. 

\end{proof}

Thus, one can expect that the auxiliary problem in Proposition \ref{prop: solu3d}, when combined with a preconditioned Krylov subspace iterative method, gives rise to an efficient solution algorithm for the three dimensional curl-curl problem with constant or variable coefficients. The solution procedure resembles Algorithm \ref{ag:ag1}.  For the sake of conciseness, we skip the details of this algorithm.  

\begin{rem}{\em 
The overall computational cost for the three-dimensional problem is dominated by the tensor-matrix multiplication, which is only a small multiple of $N^4$ operations, as opposed to $O(N^6)$ operations for assembled global system.}
\end{rem}

%

\section{Representative numerical examples}
In this section, we provide ample numerical examples to verify the accuracy and efficiency of our proposed fast divergence-free spectral algorithm. We conduct convergence tests employing both two- and three-dimensional contrived solutions, with both constant and variable coefficients.  We present the convergence rates in both $L^2$- and $H({\rm curl})$-norms. Additionally, we also subject the method to challenging numerical tests on indefinite systems with highly oscillatory solutions induced by inhomogeneous point sources.  In what follows, we adopt a stopping threshold $\varepsilon=10^{-12}$ for the proposed iterative method, without stated otherwise. 

\begin{example}\label{ex: ex1} {\bf (Convergence test for curl-curl problem with constant coefficients in 2D).} 
We first consider \eqref{eq:system} with the following manufactured solution
\begin{equation}\label{examp1}
\bs u(x,y) = \nabla \times \Big(\big(1-x^2 \big)^3 \big(1-y^2 \big)^3\big(1+\sin(\pi x)\cos(\pi y)\big)\exp\big(\sin(\pi x)\cos(\pi y) \big)\Big).
\end{equation}
Accordingly, the source term $\bs f(x,y)$ in equation \eqref{eq:system} is chosen such that the analytic expression \eqref{examp1} satisfies equation \eqref{eq:system}.
\end{example}
 \begin{figure}[htbp]
\begin{center}
  \subfigure[ Errors vs $N$]{ \includegraphics[scale=.42]{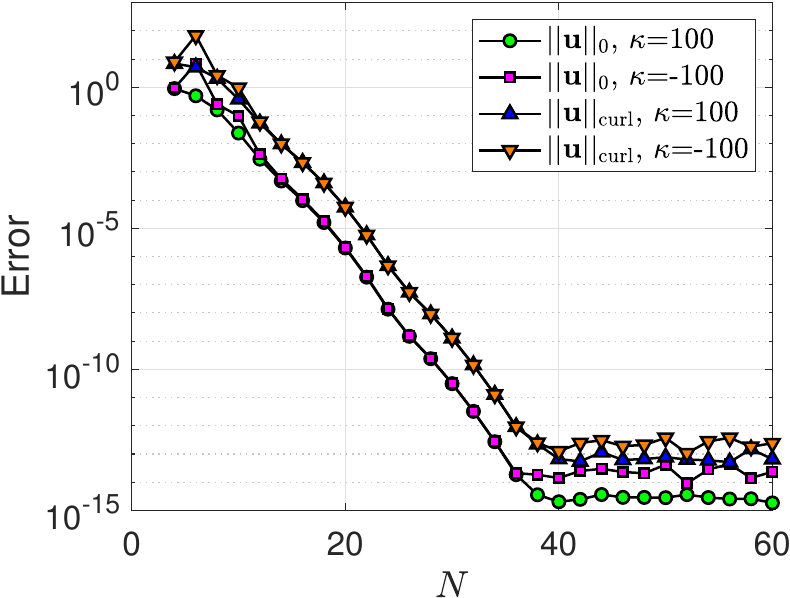}}\qquad 
  \subfigure[Iteration vs no iteration ]{ \includegraphics[scale=.42]{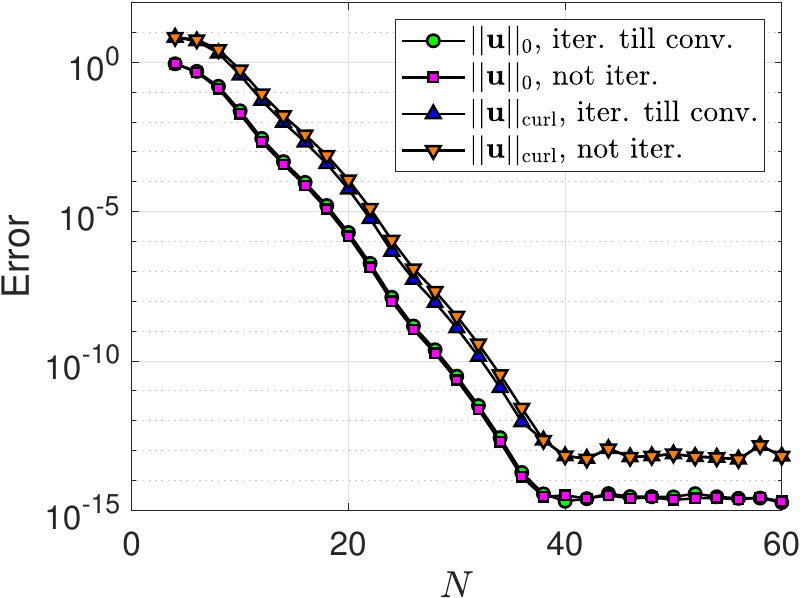}}\quad    
  \vspace{-6pt} \caption{\small Example \ref{ex: ex1}: convergence tests of the curl-curl problem with constant coefficients in 2D. (a) $L^2$- and $H({\rm curl})$-errors of $\bs u$ versus polynomial order $N$ for $\kappa=\pm 100$; (b)  compare the errors between the solution obtained from the proposed method and those obtained directly from the auxiliary problem in Proposition \ref{prop: aux2d}. } 
   \label{fig_err_N2d}
\end{center}
\end{figure}

In Fig.\,\ref{fig_err_N2d} (a), we plot the discrete $L^2$- and $H({\rm curl})$-errors on the semi-log scale against various $N$ using the proposed fast divergence-free spectral algorithm. We observe the expected exponential convergence of the proposed method for both positive and negative parameters $\kappa$ as $N$ increases.   Fig.\,\ref{fig_err_N2d} (b) compares the numerical errors of the current method with those obtained from  directly solving the auxiliary problem stated in Proposition \ref{prop: aux2d}, This comparison indicates that the auxiliary problem provides numerical solutions with almost the same accuracy as the iterative algorithm for rather smooth solutions. Thus, the auxiliary problem can serve as a good approximation for the original problem, which is in agreement with previous discussions in Section \ref{sec2.3}.

\begin{table}[h!tbp]
\caption{\small Example \ref{ex: ex1}: iteration numbers and numerical errors with various $\kappa$.}
\vspace{-6pt}\begin{tabular}{cccccccccccc}
\hline & \multicolumn{2}{c}{$\kappa=1$ } & & \multicolumn{2}{c}{ $ \kappa=100$ }  & & \multicolumn{2}{c}{$\kappa=-1$ }& & \multicolumn{2}{c}{$\kappa=-100$ } \\
\cline { 2 - 3 } \cline { 5 - 6 }\cline { 8 - 9 } \cline { 11 - 12 }$N$&  iter & Error & & iter &  Error & & iter & Error & & iter & Error \\
\hline 
   12 & 13 & 5.27e-2 & & 12 & 5.31e-2 & & 13 & 5.27e-2 & & 16 & 5.91e-2 \\
  20 & 9 & 5.61e-5 & & 9 & 5.62e-5 & & 9 & 5.61e-5 & & 10 & 5.63e-5 \\
  28 & 4 & 9.10e-9 & & 4 & 9.10e-9 & & 4 & 9.10e-9 & & 4 & 9.11e-9 \\
  36 & 1 & 9.21e-13 & & 1 & 9.21e-13 & & 1 & 9.20e-13 & & 0 & 9.26e-13 \\
  40 & 0 & 6.72e-14 & & 0 & 6.62e-14 & & 0 & 6.57e-14 & & 0 & 1.24e-13 \\
\hline
\end{tabular}\label{tableexp1}
\end{table}

Next, we provide numerical evidence for one prominent feature of the proposed method, namely, the iteration count will decrease as the polynomial order increases. We tabulate in Table \ref{tableexp1} the $H({\rm curl};\Omega)$-errors and the corresponding iteration numbers obtained by Algorithm \ref{ag:ag1} with various $\kappa$ and $N$. Notably, as $N$ increases,  the iteration count decreases, and remarkably, it drops to $0$ when $N=40.$ We then set $\kappa=1$ and further increase the polynomial order, comparing the iteration numbers and CPU time of the proposed algorithm with those obtained without any preconditioner or with Jacobi preconditioner, see Table \ref{table:3}.  It can be observed that the proposed method significantly outperforms the other two methods in terms of both iteration numbers and CPU time. Unlike the algorithms without a preconditioner or with a Jacobi preconditioner, whose iteration numbers increase with the polynomial order, the iteration numbers of the proposed method remain at 0. This occurs because the proportion of linearly independent eigenvectors of $\tilde A^{-1} A$ with respect to 0 is $(N-3)^2/(N-1)^2$, which approaches $100\%$ as $N$ gets larger.

\begin{table}[h!tbp]
\caption{\small Example \ref{ex: ex1}: iteration numbers and CPU time for large $N$.}
\setlength{\tabcolsep}{1.3mm}{
\begin{tabular}{ccccccccc}
\hline & \multicolumn{2}{c}{No preconditioner } & & \multicolumn{2}{c}{  Jacobi preconditioner }  & & \multicolumn{2}{c}{Current method } \\
\cline { 2 - 3 } \cline { 5 - 6 }\cline { 8 - 9 }$N$ &  iter & CPU time (s) & & iter &  CPU time (s) & & iter &  CPU time (s) \\
\hline 50 & 237 & 1.34 & & 158 & 0.94& & 0 & 1.82e-3 \\
  100 & 523 & 30.62 & & 248 & 7.42 & & 0 & 6.76e-3 \\
  500 & Not conv. & N.A. & & 622 & 1040.21 & & 0 & 0.29\\
  1000 & Not conv. & N.A. & & Not conv. & N.A. & & 0 & 1.58\\
\hline
\end{tabular}}\label{table:3}
\end{table}

\begin{example}\label{ex: ex3} {\bf (Curl-curl problem with point source in 2D).} 
We consider equation \eqref{eq:system} with Gaussian point source
\begin{equation*}\label{examp2}
\bs f=\nabla \times \Big( {\rm exp}\Big({-\frac{(x+0.5)^2+y^2}{\sigma^2}}\Big) + {\rm exp}\Big({-\frac{(x-0.5)^2+y^2}{\sigma^2}} \Big) \Big),
\end{equation*}
where $\sigma=0.01.$  
\end{example}

    \begin{figure}[htbp]
\begin{center}
  \subfigure[$\kappa=-100$ ]{ \includegraphics[scale=.22]{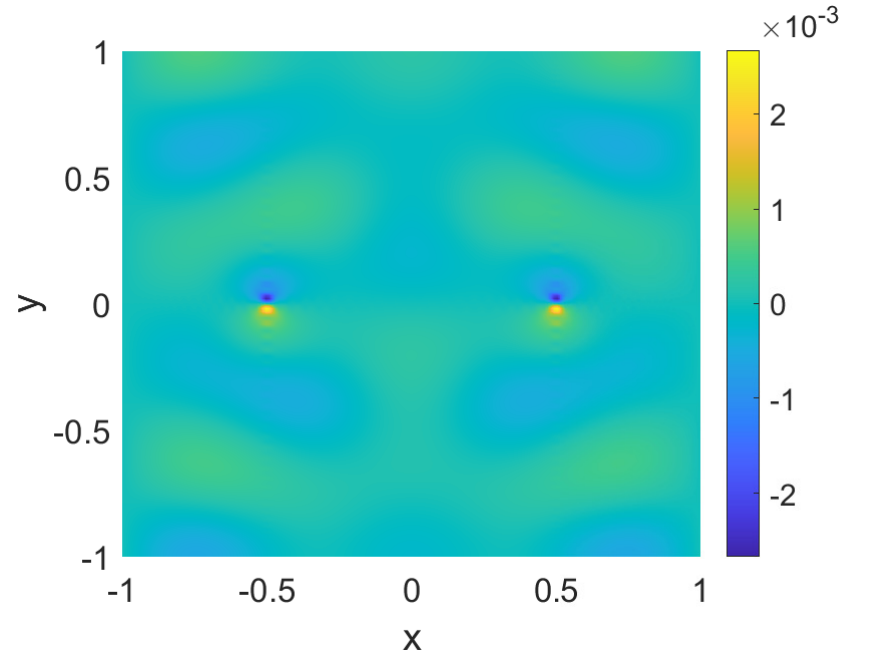}}\hspace{-9pt}
  \subfigure[$\kappa=-400$ ]{ \includegraphics[scale=.22]{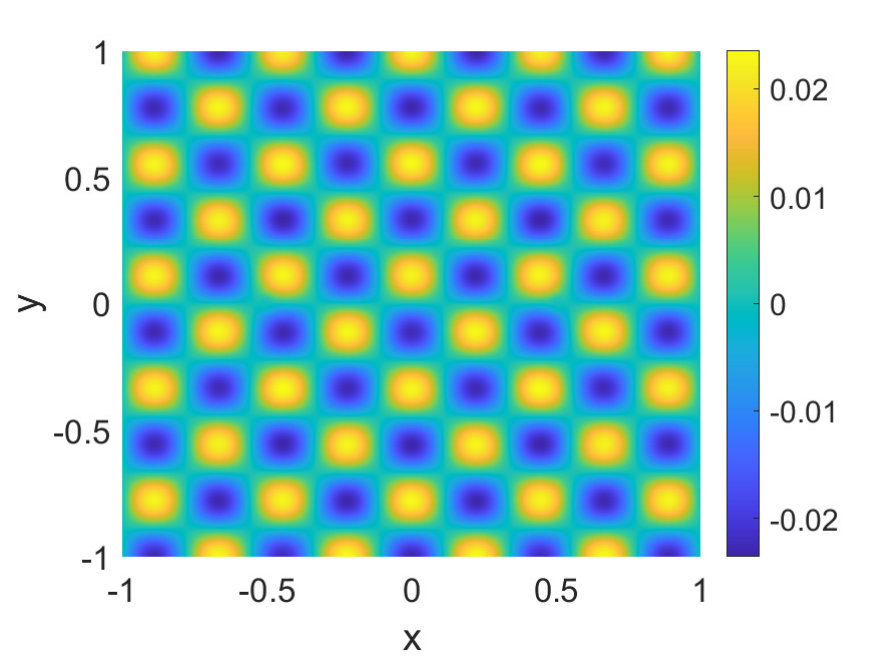}}\hspace{-9pt}
   \subfigure[$\kappa=-2500$ ]{ \includegraphics[scale=.22]{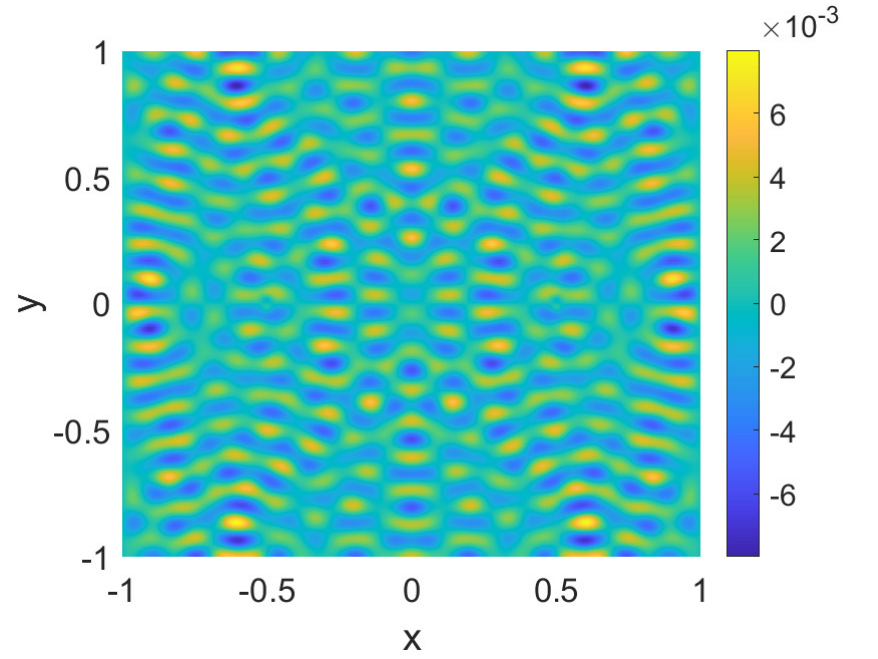}}\hspace{-9pt}
  \subfigure[$\kappa=-10000$ ]{ \includegraphics[scale=.22]{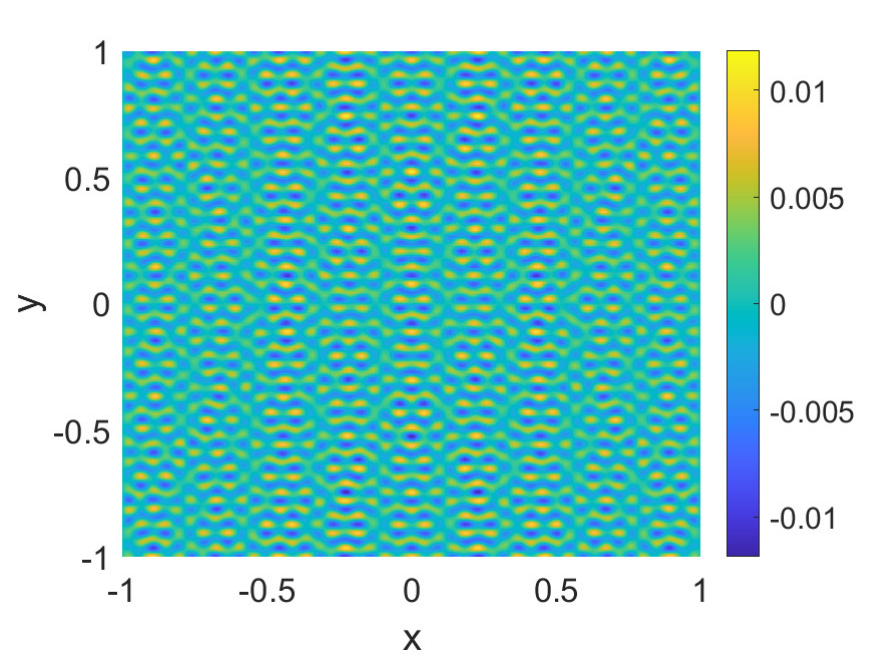}}
 \caption{\small  Example \ref{ex: ex3}: The profiles of the $x$-component of the numerical solution versus different $\kappa$. These results are obtained with a fixed polynomial order $N=128.$ } 
   \label{figsource1}
\end{center}
\end{figure}

In this case, the analytic solution is unavailable and numerical solution is obtained via the proposed fast divergence-free spectral algorithm with polynomial order $N=128$. We challenge our algorithm via highly indefinite system with highly oscillatory solutions by choosing the parameter $\kappa$ as large negative values $-100,-400,-2500,$ $-10000$, see Figure \ref{figsource1}. It can be seen that the proposed method provides good approximations, even for solutions with highly oscillation. We also tabulate in Table \ref{iterationnum2} the number of iterations of the proposed method as a function of the polynomial order, where we take $\kappa=-10000.$ Astonishingly, It can be observed that the number of iterations remains small and decreases from $35$ to $15$ as the polynomial order increases from $120$ to $520$, demonstrating the superiority of the proposed method for curl-curl problem with highly oscillatory solutions.  

\begin{table}[h!tbp]
\begin{center}
\caption{{\small Example \ref{ex: ex3}: iteration numbers for various $N$ with $\kappa=-10000$.}}\small
\centering{
\setlength{\tabcolsep}{2.8mm}{
\vspace{-6pt}\begin{tabular}{c c c c c c c c c c c c}\hline
$N$&120&160&200&240&280&320&360&400&440&480&520 \\
\hline \hline
iter    &35&32&35&34&37&34&32&26&22&18&15\\
   \hline
\end{tabular}}}\label{iterationnum2}
\end{center}
\end{table}

\begin{example}\label{ex: ex4}{\bf (Time-dependent Maxwell's equations with variable coefficients in 2D).} Consider the following time-dependent problem with variable coefficients in $\Lambda^2$ for $t\in(0,T]:$
\begin{equation} \label{eq:}
\begin{cases}
\dfrac{\partial \bs B}{\partial t}+\nabla \times \bs E=0 ,\\[5pt]
\dfrac{\partial}{\partial t}(\epsilon_0 \epsilon_r \bs E) - \frac{1}{\mu_0}\nabla \times \bs B=- \bs J ,
\end{cases}
\end{equation}
where $\bs E$ is the electric field and $\bs B$ is the magnetic field taking the form
\begin{equation*}
\bs E=(0,0, E_3(x,y,t))^{\intercal},\quad \bs B=(B_1(x,y,t),B_2(x,y,t),0)^{\intercal}.
\end{equation*}
Here, $\epsilon_0$ and $\mu_0$ are the permittivity and permeability in vaccum, respectively, and $\epsilon_r$ is the relative permittivity. The electric current density $\bs J$ is chosen as a point source given by
\begin{equation*}\label{examp4}
\bs J=(0,0,J_3(x,y))^{\intercal},\quad J_3(x,y) = \exp\Big(-\frac{(x+0.8)^2+y^2}{\sigma^2}\Big).
\end{equation*}
Here, we choose  $\epsilon_0 = 1,$  $\mu_0 = 1$, $\sigma = 0.04,$ and
\begin{equation*}\label{examptime2}
\epsilon_r(x,y) = 1.5+0.5\tanh\Big(  \frac{0.16+0.07\sin\big(6(\theta+\pi/4) \big)-x^2-y^2}{\lambda}      \Big),
\end{equation*}
where $\theta(x,y)$ is the angle of point $\bs x$ and $\lambda$ is set to be $0.05$. The boundary condition is prescribed as
\begin{equation*}\label{eq:Maxwellbd}
\bs n \cdot \bs B=0, \quad \bs n \times (\nabla \times \bs B)=0\;\; {\rm on}\; \partial \Lambda^2.
\end{equation*}
\end{example}

Let us partition the time interval $(0,T]$ using uniform time step size $\tau$ by
\begin{equation*}\label{eq: tpartition}
t_n=n\tau,\;\; n=0,1,\cdots, N_t, \quad \tau=T/N_t.\
\end{equation*}
and denote $\chi^n$ the numerical approximation of an arbitrary varaibe $\chi $ at time step $n$, corresponding to the time $t_n.$ In the temporal direction, we adopt the Crank-Nicolson (CN) temporal discretization
\begin{equation} \label{eq: CN2d}
\begin{cases}
\dfrac{ \bs B^{n+1}-\bs B^{n}}{\tau}+\nabla \times (\dfrac{\bs E^{n+1}+\bs E^n}{2} )=0 ,\\[5pt]
\epsilon_0 \epsilon_r \dfrac{\bs E^{n+1}-\bs E^n}{\tau} - \dfrac{1}{\mu_0}\nabla \times (\dfrac{\bs B^{n+1}+\bs B^n}{2})=- \bs J,
\end{cases}
\end{equation}
which can be reformulated into a curl-curl problem of $\bs B^{n+1}$. In the spatial direction, we use the proposed fast divergence-free spectral algorithm to discretize $\bs B^{n+1}$. 

We depict the temporal sequence of snapshots (see Fig.\,\ref{figs: Circle} (b)-(d)) of $B_2$ at time $t=0.31,0.71,0.81$. It can be seen that the proposed method can accurately simulate the the interaction of the electromagnetic waves with the heterogeneous medium  (see Fig.\,\ref{figs: Circle} (a)) and there is no numerical oscillations occur for long time simulations. Moreover, the average number of iterations is only around 36, indicating that the auxiliary problem serves as an effective preconditioner for Maxwell's equations with variable coefficients. 

\begin{figure}[htbp]
\begin{center}
    \subfigure[Field phase]{ \includegraphics[scale=.16]{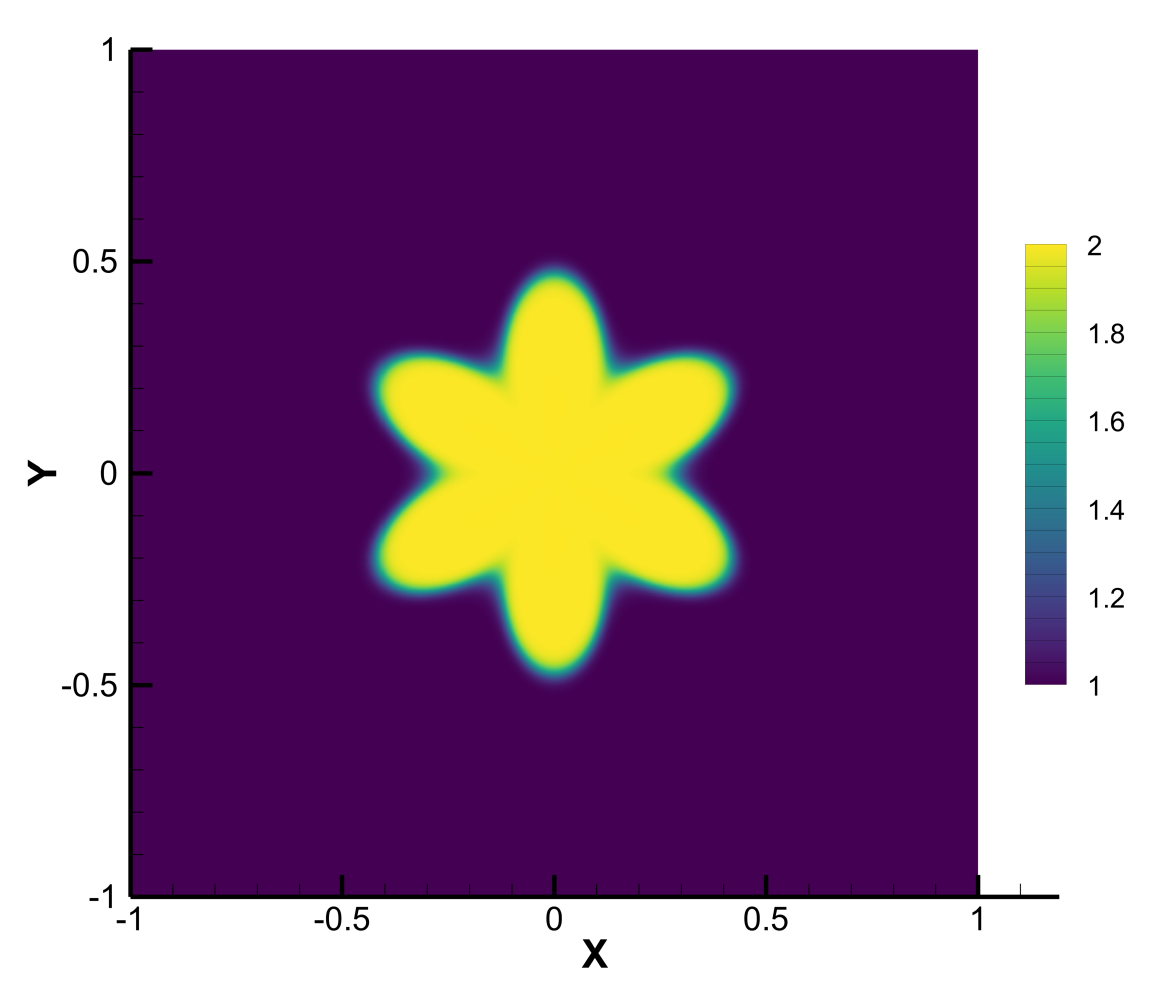}}
  \subfigure[t=0.31]{ \includegraphics[scale=.16]{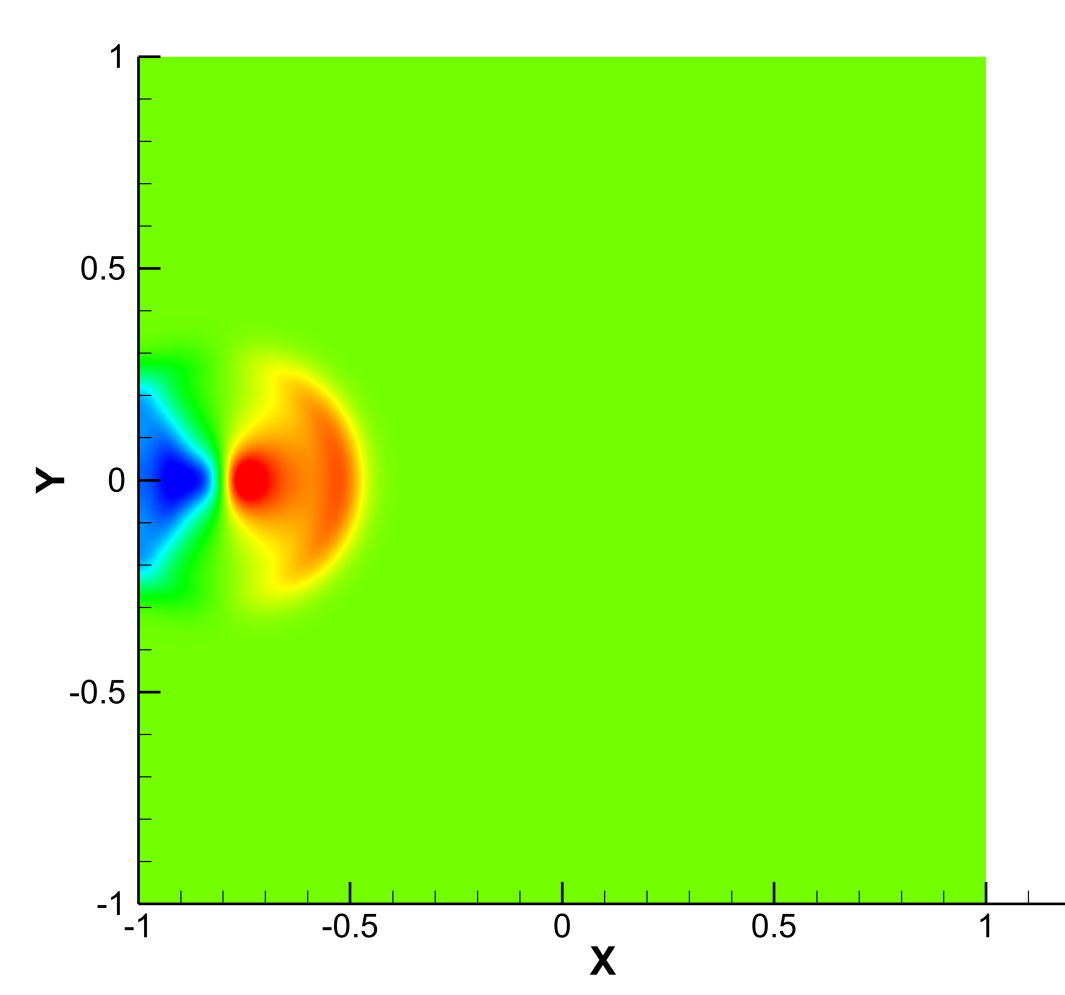}}
 \subfigure[t=0.71]{ \includegraphics[scale=.16]{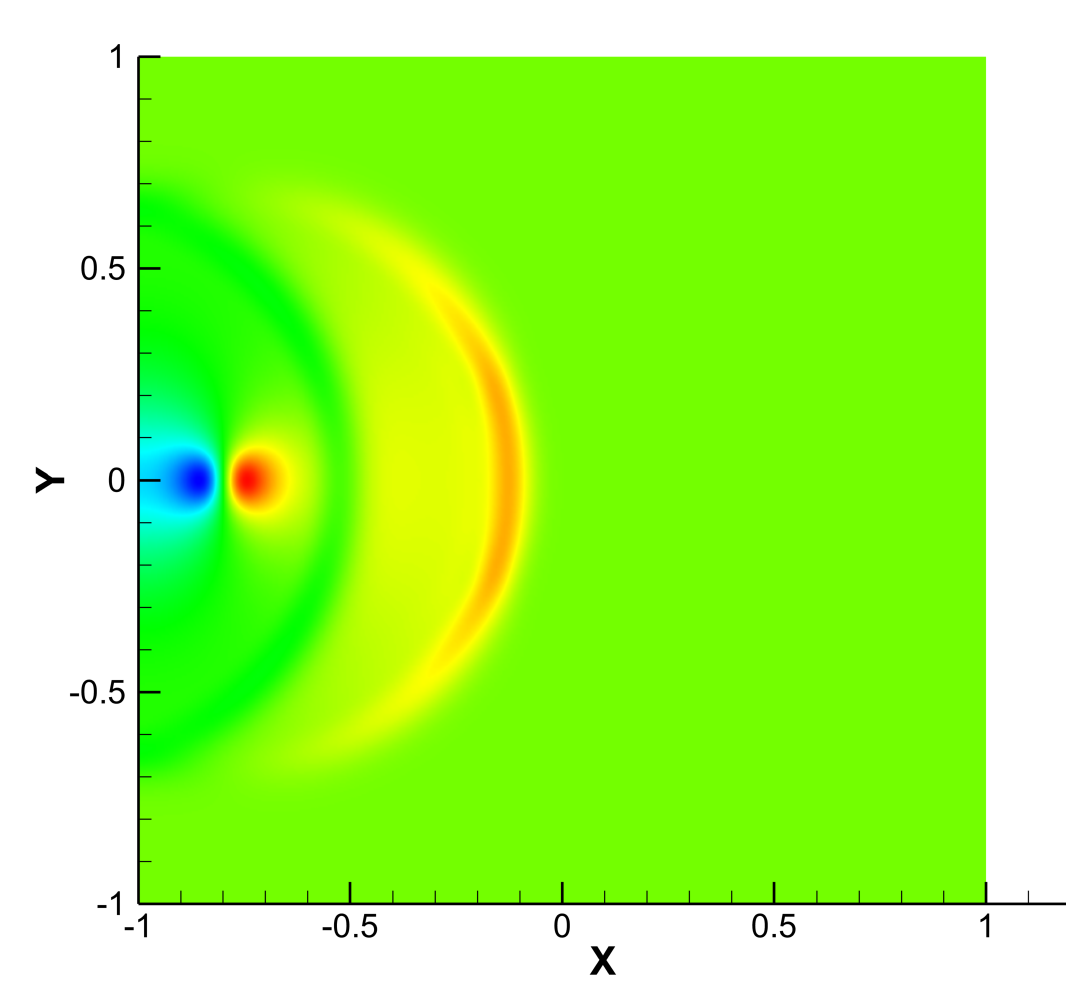}} 
  \subfigure[t=0.81]{ \includegraphics[scale=.16]{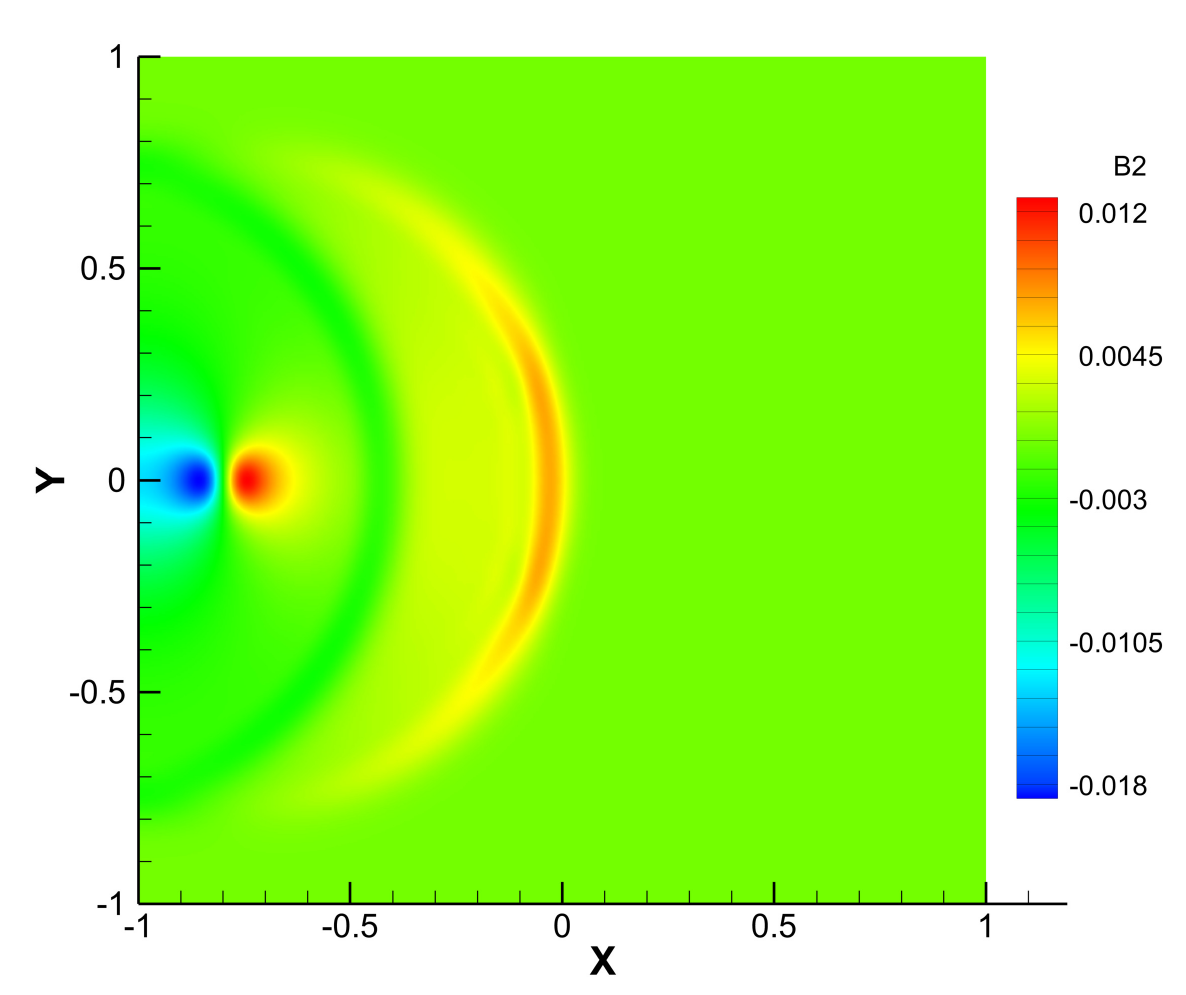}} 
    \caption{Example \ref{ex: ex4}: electromagnetic wave propagation in inhomogeneous medium. (a) Profile of the relative permittivity $\epsilon_r$; (b)-(d) snapshots of $ B_2$ at $t=0.31,0.71,0.81$. The simulations are obtained with $N=250$ and $\tau=0.01.$} 
   \label{figs: Circle}
\end{center}
\end{figure}


\begin{example}\label{ex: ex5} {\bf (Convergence test for curl-curl problem with constant coefficients in 3D).} 
Next, we consider the curl-curl problem \eqref{eq:system} in three dimensions with the manufactured solution
\begin{equation*}\label{exactsolu3d}
\bs u(x,y,z) = \nabla \times \Big(\big(1-x^2\big)^3 \big(1-y^2 \big)^3 \big(1-z^2 \big)^3\big(1+w(x,y,z) \big)\exp(w(x,y,z))\Big)(1,1,1)^{\intercal},
\end{equation*}
where $w(x,y,z)=\sin(\pi x)\sin(\pi y)\sin(\pi z)$, and the source term $\bs f(x,y,z)$ in equation \eqref{eq:system} is chosen such that the analytic expression \eqref{exactsolu3d} satisfies equation \eqref{eq:system}.
\end{example}

  \begin{figure}[htbp]
\begin{center}
  \subfigure[Errors vs $N$]{ \includegraphics[scale=.4]{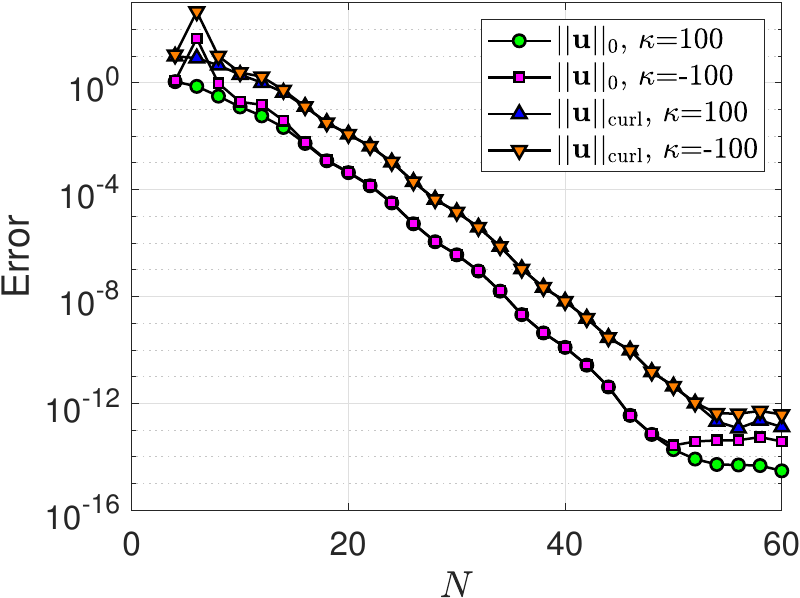}}\qquad 
  \subfigure[Iteration numbers vs $N$]{ \includegraphics[scale=.4]{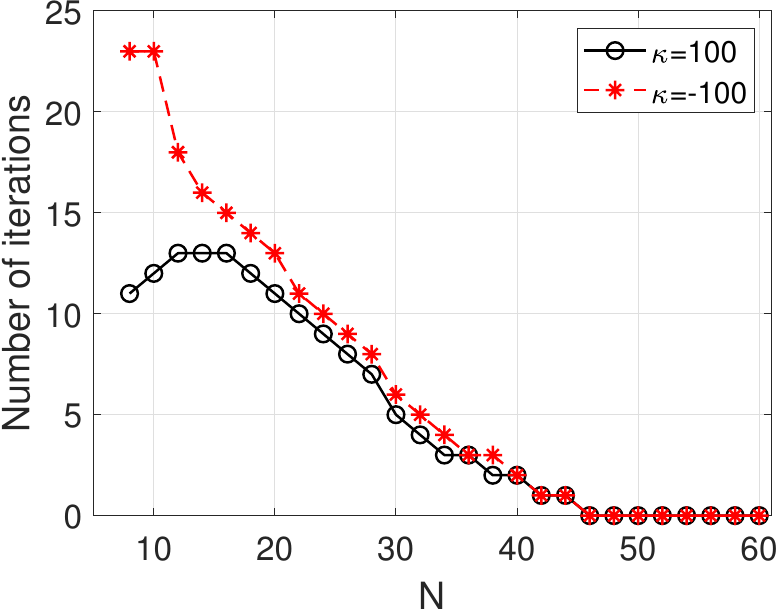}}\quad     
  \caption{\small  Example \ref{ex: ex5}: convergence tests of the curl-curl problem with constant coefficients in 3D. (a) $L^2$- and $H({\rm curl})$-errors of $\bs u$; (b) the number of iterations of the proposed method. The simulations are obtained with $\kappa=\pm 100$.} 
   \label{exam5err}
\end{center}
\end{figure}

In Fig.\,\ref{exam5err}\,(a), we plot the discrete $L^2$- and $H({\rm curl})$-errors on the semi-log scale of the proposed fast divergence-free spectral algorithm for \eqref{eq:system} with \eqref{exactsolu3d}  versus $N$. Observe that the errors decay exponentially in terms of $N$ increases for both positive and negative parameters $\kappa$. We plot the iteration numbers of the proposed method versus $N$, see Fig.\,\ref{exam5err}\,(b). Again, for a given smooth function, we can see that the iteration number reduces to $0$ as $N$ increases, which is in agreement with the analysis in Theorem \ref{thm:3dinvarspace}.

\begin{example}\label{ex: ex6}{\bf (Time-dependent Maxwell's equations with point source in 3D).} Consider the time-dependent Maxwell's equations as in Example \ref{ex: ex4}, 
where
\begin{equation*}
\bs E=(E_1(\bs x,t),E_2(\bs x,t), E_3(\bs x,t))^{\intercal},\quad \bs B=(B_1(\bs x,t),B_2(\bs x,t),B_3(\bs x,t))^{\intercal},
\end{equation*}
and the electric current density $\bs J=(J_1,0,0)^{\intercal}$ is given by
\begin{equation}\label{examp6}
 J_1(x,y,z) = \exp\Big(-\frac{x^2+y^2+(z+0.5)^2}{\sigma^2}\Big)+\exp\Big(-\frac{x^2+y^2+(z-0.5)^2}{\sigma^2}\Big).
\end{equation}
Here, we choose $\sigma = 0.05,$  $\epsilon_0 =\mu_0= 1,$ and  $\epsilon_r = 1.$
\end{example}
Once again, we resort to the Crank-Nicolson scheme for temporal discretization and the proposed fast divergence-free spectral algorithm for spatial discretization of the resultant curl-curl problem of $\bs B$ in three dimensions. In these simulations, we prescribe the polynomial order $N=100$ and the time step size $\tau= 0.02$. It can be observed that the proposed method can accurately simulate the the interaction of the electromagnetic waves induced by inhomogeneous point sources, see Fig.\,\ref{figsource3d}. Moreover, the average number of iterations is merely $8.21$, demonstrating that the proposed method is highly efficient for long time 3D simulations. 

  \begin{figure}[htbp]
\begin{center}
  \subfigure[$t=0.1$ ]{ \includegraphics[scale=.16]{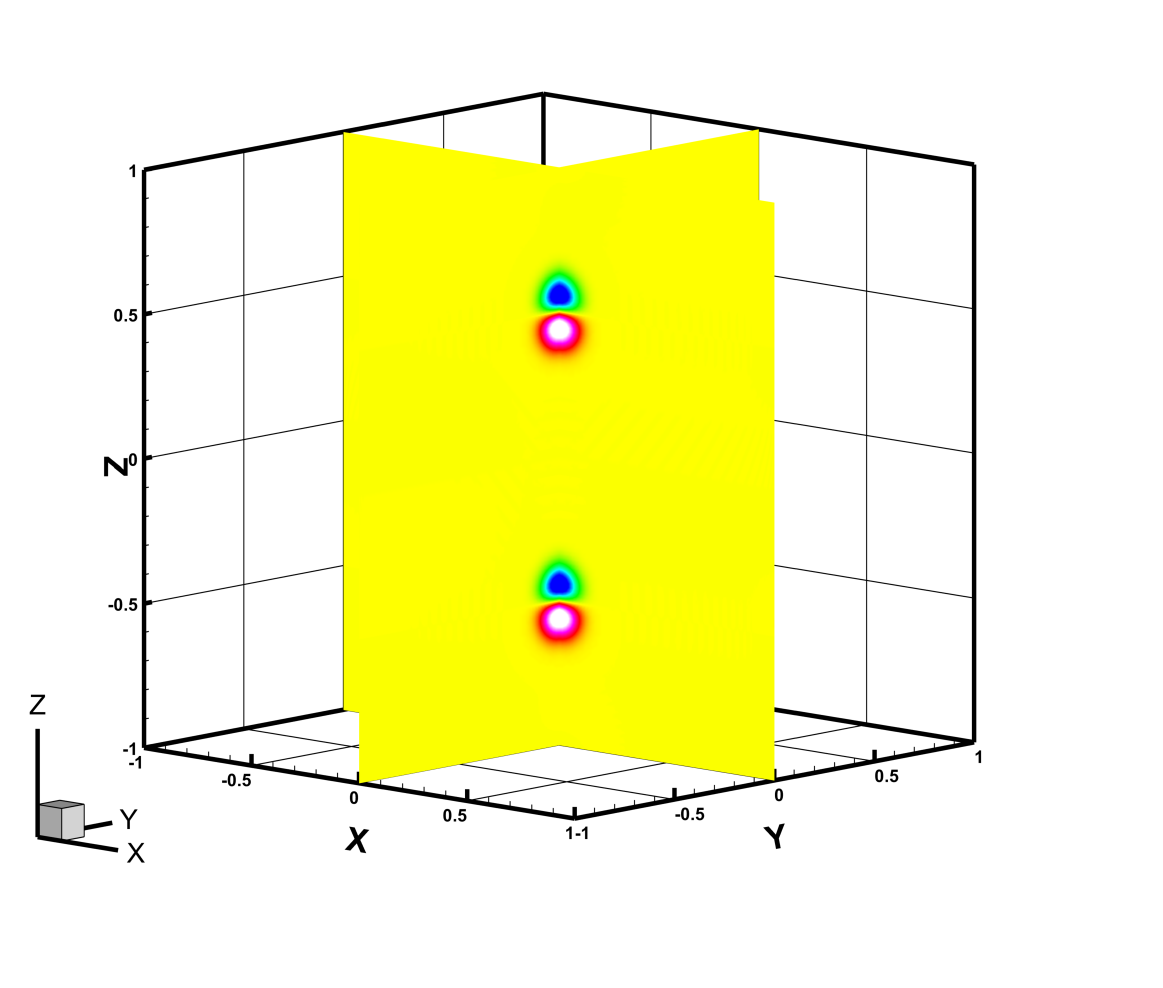}}\hspace{-8pt}
  \subfigure[$t=0.3$ ]{ \includegraphics[scale=.16]{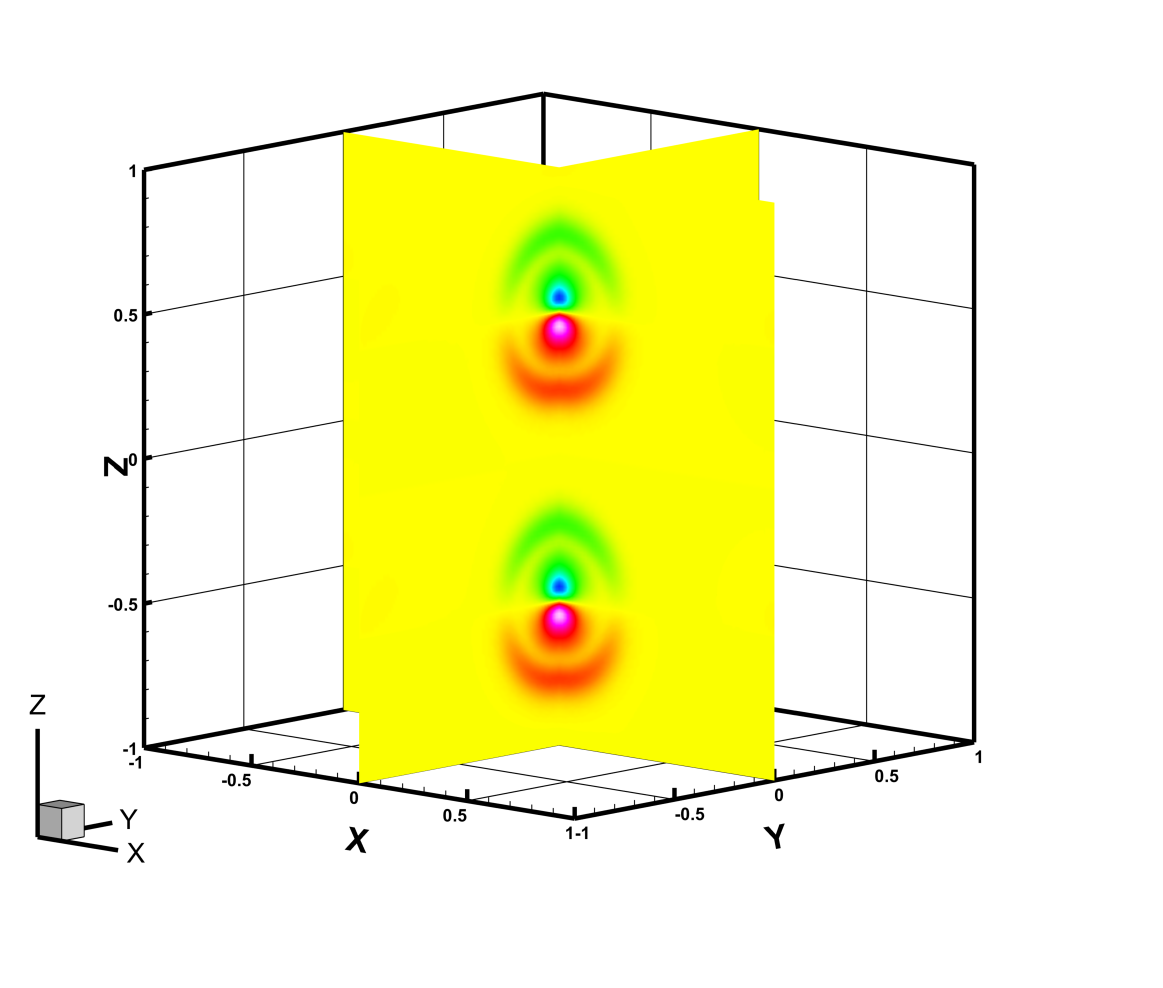}}\hspace{-8pt}
   \subfigure[$t=0.46$ ]{ \includegraphics[scale=.16]{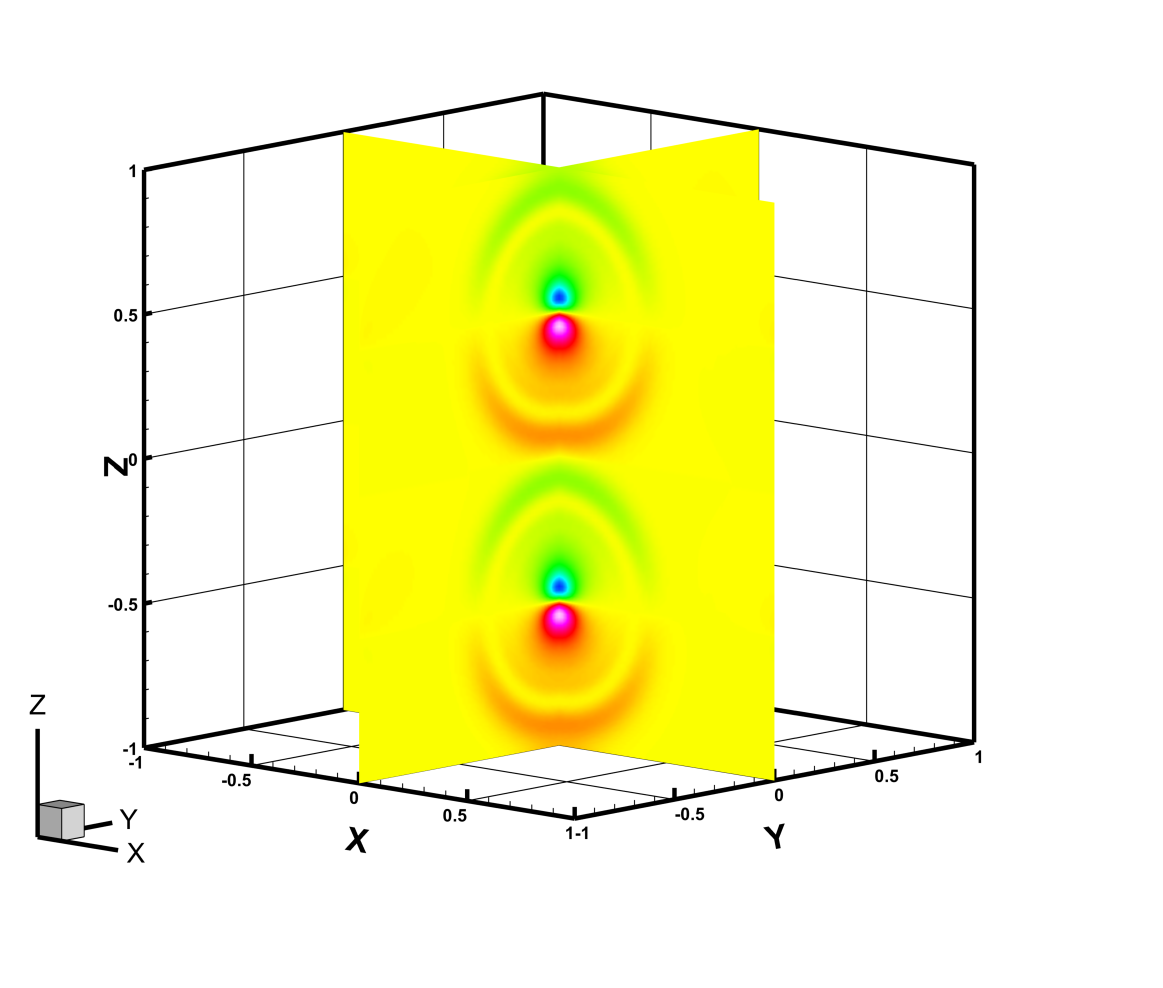}} \hspace{-8pt}
  \subfigure[$t=0.8$ ]{ \includegraphics[scale=.16]{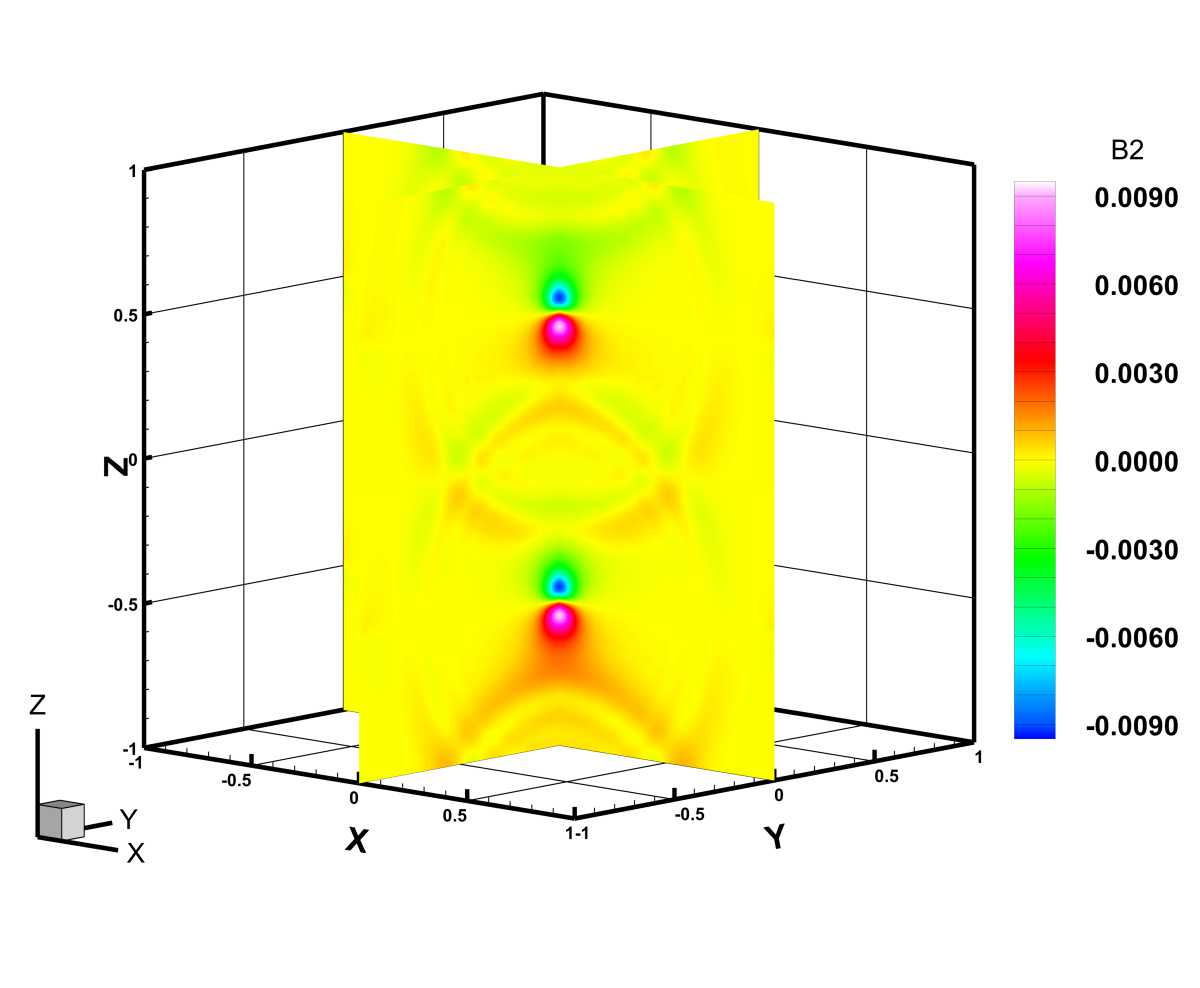}}
 \caption{\small Example \ref{ex: ex6}: electromagnetic wave propagation induced by point source in 3D. (a)-(d) snapshots of $ B_2$ at $t=0.1,0.3,0.46,0.8$. The simulations are obtained with $N=100$ and $\tau=0.02.$} 
   \label{figsource3d}
\end{center}
\end{figure}

\section{Concluding Remarks}
In this paper, we have developed fast and accurate divergence-free spectral methods using generalized Jacobi polynomials for curl-curl problems in two and three dimensions. The fast solver is integrated with two critical components: (i). the matrix-free preconditioned Krylov subspace iterative method; and (ii) the efficient preconditioner driven by a fully diagonalizable auxiliary problem. With these, the new iteration method finds remarkably efficient in solving the curl-curl problem, where the number of iterations can even be reduced to $0$ for \eqref{eq:system} with the smooth solution. More importantly, we prove rigorously that the dimensions of the invariant subspace of the resultant preconditioned linear system of the divergence-free spectral method for the dominate eigenvalue $1$ is $(N-3)^2$ and $2(N-3)^3$, for two- and three-dimensional problems with $(N-1)^2$ and $2(N-1)^3$ unknowns, respectively. Finally, numerical experiments illustrate the efficiency of the proposed scheme. Future studies along this line will include the extension of the current method to unstructured meshes using spectral-element method for two- and three-dimensional problems.

\begin{appendix}

\section{Proof of Lemma \ref{prop: MS}}\label{AppendixA0}
\renewcommand{\theequation}{A.\arabic{equation}}

The derivation can be divided into three cases with respect to the range of $m$.

\vspace{5pt}
\noindent $ \underline{\text{\bf Case\,1}: 1\leq m\leq 2,\,\, 1\leq n\leq N-1:}$ Let us fix $m=1$ first. By the analytic expression of $M$ in equation \eqref{eq: M} and  the definition of $S$ in equation \eqref{eq: S}, one obtains
\begin{equation*}\begin{split}
& M_{11}=\frac{2}{5},\;\;\;  M_{13}=-\frac{1}{5\sqrt{21}},\\& S_{1n}=\sqrt{\frac{3}{2}}\sqrt{\frac{2n+1}{2}}\int_{-1}^1 L_1'L_n' \,{\rm d}\xi,\;\; \;S_{1n}=\sqrt{\frac{7}{2}}\sqrt{\frac{2n+1}{2}}\int_{-1}^1 L_3'L_n'\,{\rm d}\xi.
\end{split}\end{equation*}
Due to the fact that $M$ is penta-diagonal, one has
\begin{equation}\label{eq: MS1}
(MS)_{1n} = M_{11}S_{1n}+M_{13}S_{3n}=\frac{\sqrt{3(2n+1)}}{5}\int_{-1}^1 \big(L_1'(\xi)-L_3'(\xi)/6  \big )L_n'(\xi)\,{\rm d}\xi.  \end{equation}
By using integration by parts and orthogonality of Legendre polynomials, equation \eqref{eq: MS1} leads to
\begin{equation}\label{MScase11}\begin{split}
(MS)_{1n} &=\frac{\sqrt{3(2n+1)}}{4}\Big\{ [(1-\xi^2)L_n(\xi) ]\big|_{-1}^1 +2\int_{-1}^1 L_1(\xi) L_n(\xi) \,{\rm d}\xi \Big\}\\&= \frac{\sqrt{3(2n+1)}}{3}\delta_{1n}=\delta_{1n}.
\end{split}\end{equation}
Following the same procedure, one can obtain that
\begin{equation}\label{MScase12}
\begin{aligned}
(MS)_{2n}= & M_{22}S_{2n}+M_{24}S_{4n}=\frac{1}{14} \sqrt{\frac{2n+1}{5}} \int_{-1}^1 \Big(\frac{10}{3}L_2'(\xi)-L_4'(\xi)\Big)L_n'(\xi)\,{\rm d}\xi           \\
 = &    \sqrt{\frac{2n+1}{5}}  \frac{5}{2}\int_{-1}^1 L_2(\xi)L_n(\xi)\,{\rm d}\xi=\sqrt{\frac{2n+1}{5}}\delta_{2n}=\delta_{2n}.
 \end{aligned}
\end{equation}

\vspace{5pt}
\noindent $ \underline{\text{\bf Case}\,2: 3\leq m\leq N-3,\,\, 1\leq n\leq N-1:}$
Since $M$ is penta-diagonal, $(MS)_{mn}$ reduces to
\begin{equation}\label{eq: 3term}
(MS)_{mn} =  M_{m(m-2)}S_{(m-2)n}+M_{mm}S_{mn}+M_{m(m+2)}S_{(m+2)n},
\end{equation}
among which 
\begin{equation*}
\begin{aligned}
&M_{m(m-2)}S_{(m-2)n}= \frac{1}{2}\sqrt{\frac{2n+1}{2m+1}} \int_{-1}^1 \Big( \frac{-1}{2m-1}  L_{m-2}'(\xi) \Big)\, L_n'(\xi)\,{\rm d}\xi,\\
&M_{mm}S_{mn}= \frac{1}{2}\sqrt{\frac{2n+1}{2m+1}}\int_{-1}^1 \Big( \frac{1}{2m-1}+\frac{1}{2m+3}  \Big)L_m'(\xi)\, L_n'(\xi)\,{\rm d}\xi,\\
& M_{m(m+2)}S_{(m+2)n}=\frac{1}{2}\sqrt{\frac{2n+1}{2m+1}} \int_{-1}^1 \Big( \frac{-1}{2m+3} L_{m+2}'(\xi) \Big) L_n'(\xi)\,{\rm d}\xi.
\end{aligned}
\end{equation*}
By using the recurrence relation \eqref{eq: reccu}, we have that
\begin{equation*}
 \frac{-1}{2m-1} L_{m-2}'(\xi)+ \Big( \frac{1}{2m-1}+\frac{1}{2m+3}  \Big)L_m'(\xi)+\frac{-1}{2m+3} L_{m+2}'(\xi) =L_{m-1}(\xi)-L_{m+1}(\xi).
\end{equation*}
Thus, it is direct to obtain 
\begin{equation*}\label{eq: MScase2}
\begin{aligned}
&(MS)_{mn}=\frac{1}{2}\sqrt{\frac{2n+1}{2m+1}} \int_{-1}^1\big(L_{m-1}(\xi)-L_{m+1}(\xi) \big)L_n'(\xi)\,{\rm d}\xi\\
&=\frac{1}{2}\sqrt{\frac{2n+1}{2m+1}} \Big\{  \big[ (L_{m-1}(\xi)\hspace{-2pt}-\hspace{-2pt}L_{m+1}(\xi))L_n(\xi) \big]_{-1}^1\hspace{-4pt}+\int_{-1}^1(L_{m+1}(\xi)-L_{m-1}(\xi))' L_n(\xi) \,{\rm d}\xi       \Big\}\\
&=\frac{1}{2}\sqrt{\frac{2n+1}{2m+1}}  \int_{-1}^1 (2m+1)L_m(\xi)L_n(\xi)\,{\rm d}\xi=\sqrt{\frac{2m+1}{2n+1}}\delta_{mn}=\delta_{mn},
\end{aligned}
\end{equation*}
where integration by parts, the recurrence relation \eqref{eq: reccu}, the boundary values of Legendre polynomials in \eqref{eq: bcL}, and the orthogonality relation \eqref{eq: Linner} has been applied. 

\vspace{5pt}
\noindent $ \underline{\text{\bf Case}\, 3: N-2\leq m\leq N-1,\,\, 1\leq n\leq N-1:}$
Fix $m=N-2$ and recall that $M$ is penta-diagonal, one has
\begin{equation*}\label{eq: Msn2}
\begin{aligned}
&(MS)_{(N-2)n} =M_{(N-2)(N-4)}S_{(N-4)n}+M_{(N-2)(N-2)}S_{(N-2)n}\\
=&\Big(  M_{(N-2)(N-4)}S_{(N-4)n}+M_{(N-2)(N-2)}S_{(N-2)n} +M_{(N-2)N}S_{Nn} \Big)-M_{(N-2)N}S_{Nn}\\
                       =& \delta_{(N-2)n}+\frac{1}{4}\sqrt{\frac{2n+1}{2N-3} } \frac{n(n+1)}{2N-1} \Big( 1+(-1)^{n+N}  \Big),
\end{aligned}
\end{equation*}
where equations \eqref{eq: M}-\eqref{eq: S} and the fact that the three-term formula in equation \eqref{eq: 3term} equals to the Dirac-delta function have been used.  Again, by invoking \eqref{eq: M}-\eqref{eq: S}, one obtains that for $m=N-1$,
\begin{equation*}\label{eq: Msn3}
\begin{aligned}
&(MS)_{(N-1)n} =  M_{(N-1)(N-3)}S_{(N-3)n}+M_{(N-1)(N-1)}S_{(N-1)n}\\
=&\Big( M_{(N-1)(N-3)}S_{(N-3)n}+M_{(N-1)(N-1)}S_{(N-1)n} +M_{(N-1)(N+1)}S_{(N+1)n} \Big)\\&-M_{(N-1)(N+1)}S_{(N+1)n} \\
                       =& \delta_{(N-1)n}+\frac{1}{4}\sqrt{\frac{2n+1}{2N-1} } \frac{n(n+1)}{2N+1} \Big( 1+(-1)^{n+N+1}  \Big).
\end{aligned}
\end{equation*}
This ends the proof.

\end{appendix}

\section*{Acknowledgment}
Helpful discussions with Professor Huiyuan Li (Institute of Software, Chinese Academy of Sciences) are gratefully acknowledged.

 \bibliographystyle{plain}
\bibliography{refpapers}


\end{document}